\theoremstyle{plain}
\newtheorem{thm}{Theorem}\newtheorem{lm}[thm]{Lemma}
\newtheorem{prop}[thm]{Proposition}
\newtheorem*{mthm*}{Main Theorem}
\newtheorem{claim}{Claim}
\theoremstyle{definition}
\newenvironment{rem}
{\pushQED{\qed}\remx}
{\popQED\endremx}
\def\={\;=\;}  \def\+{\,+\,}
\newcommand{\GL}{\operatorname{GL}}
\newcommand{\divisor}[1]{{\rm div }\left( #1 \right)}
\newcommand{\calH}{\mathcal H}
\newcommand{\calM}{{\mathcal M}}
\newcommand{\CC}{{\mathbb{C}}}
\newcommand{\PP}{{\mathbb{P}}}
\newcommand{\QQ}{{\mathbb{Q}}}
\newcommand{\RR}{{\mathbb{R}}}
\newcommand{\ZZ}{{\mathbb{Z}}}
\newcommand{\poles}{\underline{p}}
\newcommand{\zeroes}{\underline{z}}
\DeclareDocumentCommand{\kmoduli}{ O{\mu} O{g} O{n} }{\calM_{#2,#3}(#1)}
\DeclareDocumentCommand{\barkmoduli}{ O{\mu} O{g} O{n} }{\overline{\calM}_{#2,#3}(#1)}
\DeclareDocumentCommand{\obarkmoduli}{ O{\mu} O{g} O{n} }{\Omega\overline{\calM}_{#2,#3}(#1)}
\DeclareDocumentCommand{\LMS}{ O{\mu} O{g,n}} {{\Xi\overline{\calM}_{#2}(#1)}}
\DeclareDocumentCommand{\RBLMS}{ O{\mu} O{g,n}}{\Xi\widehat{\calM}_{#2}(#1)}
\newcommand{\lG}{{\Gamma}}
\newcommand{\hor}{{\mathrm {hor}}}
\newcommand{\ver}{{\mathrm {ver}}}
\DeclareDocumentCommand{\Ehor}{O{\lG}}{E^{\hor}(#1)}   \DeclareDocumentCommand{\Ever}{O{\lG}}{E^{\ver}(#1)}  \DeclareDocumentCommand{\Ehori}{O{i} O{\lG} }{E^{\hor}_{(#1)}(#2)}   
\DeclareDocumentCommand{\abEhor}{O{\lG}}{\widetilde{E^{\hor}}(#1)}
\newcommand{\oHo}{{\overline{\calH^\circ}}}
\newcommand{\pHo}{\partial\overline{\calH^\circ}}
\DeclareDocumentCommand{\relhomZ}{O{X} O{\zeroes} O{\poles}}{H_1(#1\setminus #3,#2;\ZZ)}
\DeclareDocumentCommand{\relhomC}{O{X} O{\zeroes} O{\poles}}{H_1(#1\setminus #3,#2;\CC)}
\newcommand\Dhirr[1][]{{D^{h,{\rm irr}}_{#1}}}
\let\@wraptoccontribs\wraptoccontribs
\begin{document}

\title{Ends of the strata of differentials}
\date{\today}
\author[B.~Dozier]{Benjamin Dozier}
\thanks{Research of the first author is supported in part by the National Science Foundation under the grant DMS-22-47244, and by the Simons Foundation}
\address{Mathematics Department, Cornell University, Ithaca, NY 14853, USA}
\email{benjamin.dozier@cornell.edu}

\author[S.~Grushevsky]{Samuel Grushevsky}
\thanks{Research of the second author is supported in part by the National Science Foundation under the grant DMS-21-01631.}
\address{Department of Mathematics and Simons Center for Geometry and Physics, Stony Brook University, Stony Brook, NY 11794-3651}
\email{sam@math.stonybrook.edu}

\contrib[with an appendix by]{Myeongjae Lee}
\address{Department of Mathematics, Stony Brook University, Stony Brook, NY 11794-3651}
\email{myeongjae.lee@stonybrook.edu}

\begin{abstract}
We enumerate the ends of each stratum of meromorphic 1-forms on Riemann surfaces with prescribed multiplicities of zeroes and poles. Our proof uses degeneration techniques based on the construction in~\cite{BCGGMmsds} of the moduli space of multi-scale differentials, together with recent classification of connected components of generalized strata in \cite{lewo}.  In particular, from these results we  quickly deduce the theorem for holomorphic 1-forms, originally proved by Boissy~\cite{BoissyEnds}. 
\end{abstract}

\maketitle

\section{Introduction}

Consider the moduli space of Riemann surfaces of genus $g$ equipped with a meromorphic 1-form.  The multiplicative group $\CC^*$ acts on this space via multiplication of the differential by a complex number.  The resulting quotient is stratified by the combinatorial data of the multiplicities of zeros and poles.  The~{\em stratum $\calH_g(m_1,\dots,m_n)$} is the moduli space of data $(X,z_1,\dots,z_n,\omega)$ where $(X,z_1,\dots,z_n)\in\calM_{g,n}$ is a smooth Riemann surface with~$n$ distinct labeled marked points, and $\omega$ is a projective class of non-zero meromorphic $1$-forms on~$X$ such that its divisor of zeroes and poles is $\divisor\omega=\sum m_i z_i$. Here $m_1,\dots,m_n\in\ZZ$, and $\sum m_i=2g-2$. (The reason that we work with the projectivized differentials is that without this there is an obvious, non-interesting source of non-compactness coming from scaling). 

Perhaps the first result on the topology of the strata is due to Kontsevich and Zorich~\cite{kozo1} who enumerated the connected components of the {\em holomorphic} strata of 1-forms --- that is, for the case when all $m_i>0$. Boissy \cite{BoissyMeromCC} classified the connected components of the strata in the general case.

Strata of holomorphic 1-forms and strata of holomorphic quadratic differentials are never compact, as can be easily seen using the $\GL(2,\RR)$-action.  Recently, in particular with the development of degeneration techniques for Teichm\"uller dynamics, and with the investigation of the enumerative parallels between the strata of differentials and the double ramification loci, the loci of meromorphic differentials have become more prominent, even when approaching questions for the holomorphic case. 

\smallskip
Recall that for a topological space~$X$, the number of ends of $X$ is the limit of the number of connected components of~$X\setminus K_n$, for an exhaustion of~$X$ by compact subsets~$K_n$. The main result, Theorem 1.1, of Boissy's paper~\cite{BoissyEnds} is that every connected component of every stratum of holomorphic quadratic differentials has only one end. Since any connected component of a stratum of holomorphic 1-forms gives, by squaring, a connected component of the stratum of holomorphic quadratic differentials, it follows that each connected component of the stratum of holomorphic 1-forms also has only one end.

In this paper we use a compactification of $\calH_g(\mu)$ and degeneration techniques to study the ends of strata of arbitrary meromorphic strata. For the case of strata of holomorphic differentials, from our techniques we quickly re-deduce Boissy's result.

\begin{mthm*}
\label{mainthm}
Consider a connected component $\calH^\circ$ of a stratum $\calH_g(m_1,\dots,m_n)$ of projectivized differentials.  If $\dim \calH^\circ \ge 2$, then $\calH^\circ$ has exactly one end.
\end{mthm*}

\begin{rem}
    The number of ends when $\dim \calH^\circ < 2$ was known previously (the three cases below cover all possibilities):
    \begin{itemize}
        \item  If $g=0$, $n\le 3$, the number of ends equals $0$, since $\calH^\circ$ is compact.
    
        \item If $(g,n)=(0,4)$, the number of ends equals 3.  In this case, the stratum is connected, and simply isomorphic to $\calM_{0,4}$, so it has one end for each boundary point of $\overline{\calM}_{0,4}$ (recall that in our strata the zeroes and poles are labeled, so even if the orders of the differential are the same at some of them, these are distinct points in the stratum).

        \item If $(g,n)=(1,2)$, so that $(m_1,m_2)=(m,-m)$, recall from \cite{BoissyMeromCC} that the connected components of $\calH_1(m,-m)$ are classified by the rotation number $r$ dividing $m$ (the more general notion of rotation number $r\mid \gcd (m_1,\dots,m_n)$ classifying the components of $\calH_1(m_1,\dots,m_n)$ is also recalled in the appendix). Let then $\calH^\circ$ be the component of $\calH_1(m,-m)$ consisting of differentials with rotation number~$r$. Then the number of ends of $\calH^\circ$ equals
        $$
         \frac12\sum_{d\mid \tfrac{m}{r}}\phi(d)\,\phi\left(\tfrac{m}{rd}\right),
        $$ 
        where $\phi(n)$ is the Euler function counting the number of divisors of~$n$.  
        
        To see this, note that $\calH^\circ$ is isomorphic to the modular curve $X_1(m/r)$ (by taking the $r^{\text{th}}$ power of the differential). The ends of $\calH^\circ$ thus correspond to cusps of this modular curve, the number of which is known classically, and we refer to \cite[Thm.~2.4]{taharchamber} for a detailed discussion from the point of view of flat surfaces.
    \end{itemize}  
\end{rem}

The genus $g=0$ case of the Main Theorem will be easy, since each such stratum is isomorphic to $\calM_{0,n}$, which is known to have one end.

For all other strata, our main tool is the \emph{multi-scale compactification}, that is the moduli stack~$\LMS$ of multi-scale differentials.  It is a compactification of $\calH_g(\mu)$ constructed in~\cite{BCGGMmsds}. It is a stack with a normal crossing boundary divisor $\partial\LMS$. The boundary of the moduli space of multi-scale differentials is stratified, and each boundary stratum is essentially parameterized by a generalized stratum of differentials, that is, by a product of subsets of strata consisting of differentials satisfying residue conditions, together with some prong-matching data (see \cite[Sec.~6]{euler} for more details).

To prove our result, it suffices to show that the boundary $\partial\LMS$ of the multi-scale compactification is connected.  We will focus on one irreducible component of $\partial\LMS$, the ``irreducible type horizontal" boundary divisor $\Dhirr\subset\partial\LMS$. We will show that in the holomorphic case, $\Dhirr$ intersects any other irreducible boundary divisor, and in the meromorphic case we will show that those (very special) irreducible components of the boundary~$\partial\LMS$ that may not intersect~$\Dhirr$ themselves, do intersect other boundary divisors that intersect~$\Dhirr$. Then once we establish that $\Dhirr$ itself is connected, the proof that each connected component of each stratum has one end is complete.

For a connected component of the holomorphic stratum $\calH_g(m_1,\dots,m_g)$ with all $m_i\ge 0$, the boundary $\Dhirr$ is a connected component of the (usual, not generalized) stratum $\calH_{g-1}(m_1,\dots,m_g,-1,-1)$, and the connected components of meromorphic strata were classified by Boissy \cite{BoissyMeromCC}. For the meromorphic case, $\Dhirr$ is  the generalized stratum $\calH_{g-1}(m_1,\dots,m_g \mid -1,-1)\subset\calH_{g-1}(m_1,\dots,m_g,-1,-1)$ where we recall that the $\mid$ notation indicates that the residues at the two last simple poles are required to sum to zero. The connected components of generalized strata were just now classified by Lee and Wong \cite{lewo}, in full generality, and in particular for this case of relevance for us.

\begin{rem}
Perhaps the first interesting topological invariant of a (generalized) stratum~$\calH$ is its number of connected components, i.e.~$\dim H^0(\calH,\QQ)$, and the number of ends is perhaps the next natural topological invariant. For the approach we take here, via the moduli of multi-scale differentials, it is natural to ask to determine the combinatorics of the boundary complex of $\LMS$ --- its connectedness is equivalent to the fact that there is only one end, but its higher homology would shed further light on the homology of the strata, see \cite{cgp}. This topic will be pursued elsewhere.
\end{rem}

\subsection*{Acknowledgements}
We thank Mathematisches Forschungsinstitut Oberwolfach for hosting us in-person in June 2021, when we started thinking about this problem.  We are extremely grateful to Myeongjae Lee for enlightening discussions, and for providing the appendix.  We thank Juliet Aygun for corrections on an earlier draft.  

\section{Ends of strata of holomorphic 1-forms}
For motivation, and to show the power of using the degeneration methods, in this section we use $\LMS$ to quickly rederive the main result of~\cite{BoissyEnds} for the strata of holomorphic 1-forms. In the course of the proof, we will highlight, and label \Cref{claim:A}, \Cref{claim:B}, and \Cref{claim:C} the key parts of the argument that would need a modification, or a different proof, for the general case of meromorphic differentials.

We start by recalling that if the boundary of a compactification is connected, there is one end.  Here and in the sequel, $\oHo$ denotes the closure in the multi-scale space $\LMS$ of a connected component $\calH^\circ$ of a stratum of projectivized differentials.

\begin{lm}
    \label{lm:bdy-cpx-end}
    For any $\calH^\circ$, if $\pHo$ is non-empty and connected, then $\calH^\circ$ has exactly one end.  
\end{lm}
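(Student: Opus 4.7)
The plan is to compute the number of ends of $\calH^\circ$ using the compact space $\oHo$ as a compactification, reducing the question to whether small neighborhoods of $\pHo$ in $\oHo$ remain connected after removing $\pHo$.

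First, I would set up a convenient exhaustion by compacts. Since $\LMS$ is compact, so is the closed substack $\oHo \subset \LMS$. Choose a countable nested basis of open neighborhoods $U_1 \supset U_2 \supset \dotsb$ of $\pHo$ in $\oHo$ with $\bigcap_n U_n = \pHo$, and put $K_n := \oHo \setminus U_n \subset \calH^\circ$. Then $\{K_n\}$ is a nested sequence of compact subsets of $\calH^\circ$, cofinal in its compact subsets (since $\oHo$ is Hausdorff and $\pHo$ is compact), so
$$
\#\text{ends}(\calH^\circ) \= \lim_{n\to\infty} |\pi_0(\calH^\circ \setminus K_n)| \= \lim_{n\to\infty} |\pi_0(U_n \setminus \pHo)|.
$$
It therefore suffices to verify that $U_n \setminus \pHo$ is connected for all $n$ large enough.

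Second, I would arrange that each $U_n$ itself is connected. By \cite{BCGGMmsds}, $\LMS$ is a smooth Deligne--Mumford stack whose boundary $\partial\LMS$ is a simple normal crossings divisor, so $\oHo$ is a closed complex-analytic substack and in particular is locally path-connected. Since $\pHo$ is compact and connected, the connected component of $U_n$ containing $\pHo$ is itself an open neighborhood of $\pHo$; replacing $U_n$ by it, I may assume each $U_n$ is connected.

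Third, I would conclude using codimension-two connectedness. The boundary $\pHo = \oHo \cap \partial\LMS$ is a proper closed analytic subset of $\oHo$, of real codimension at least two at every point. The standard perturbation argument --- any path in a connected open $U_n$ between two points of $U_n \setminus \pHo$ can be replaced, in local analytic charts where $\pHo$ is cut out by the vanishing of holomorphic coordinates, by a nearby path avoiding $\pHo$ --- shows that $U_n \setminus \pHo$ is connected. Hence the number of ends of $\calH^\circ$ equals $1$.

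The only potential subtlety is ensuring that the local analytic structure of $\oHo$ near $\pHo$ is regular enough to support the perturbation argument, which is guaranteed by the smoothness of $\LMS$ with normal crossings boundary from \cite{BCGGMmsds}. I do not anticipate any serious obstacle here; this lemma is essentially a topological warm-up, and the real work of the paper will be in verifying that the hypothesis ``$\pHo$ is non-empty and connected'' actually holds.
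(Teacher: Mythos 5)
Your proposal is correct and follows essentially the same route as the paper: build a connected open neighborhood of $\pHo$ (using connectedness of $\pHo$ plus local connectivity of the smooth compactification), then use that $\pHo$ has real codimension at least two to see that deleting it preserves connectedness, and translate this into the one-end statement via a compact exhaustion. The only difference is cosmetic bookkeeping --- you set up a cofinal nested exhaustion $K_n = \oHo\setminus U_n$, while the paper starts from an arbitrary compact $K$ and produces a larger compact $K'$ with connected complement.
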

\begin{proof}
Let $K\subset \calH^\circ$ be compact, and let $U \coloneqq \oHo-K$, which is open. By taking the union of open balls centered at each point of $\pHo$, we can construct an open set $U'\subset U$ with $U'\supset \pHo$. Since $\pHo$ is assumed to be connected, and $U'$ is a union of open balls centered at its point, $U'$ must also be connected. By \cite{BCGGMmsds}, the compactification $\oHo$ is a smooth, compact, complex orbifold, and so since $\pHo$ has positive (complex) codimension, $U'\setminus\pHo$ is also  connected. Now the complement $\calH^\circ-U'$ is a compact set $K'$ that contains $K$.  The complement of $K'$ in $\calH^\circ$ is exactly $U'-\pHo$. Thus any such $K$ lies in a larger compact $K'$ whose complement has a single component.  This implies that $K$ has one unbounded component, and we are done.
\end{proof}

\begin{thm}[\cite{BoissyEnds}, the case of abelian differentials]\label{thm:abelian}
Any connected component $\calH^\circ$ of any stratum $\calH_g(m_1,\dots,m_n)$ of {\em holomorphic} 1-forms has only one end.
\end{thm}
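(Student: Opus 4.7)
My plan is to invoke \Cref{lm:bdy-cpx-end} and reduce the theorem to showing that the multi-scale boundary $\pHo$ is non-empty and connected. Following the strategy sketched in the introduction, I will establish three claims, which I will label A, B, C to match the scheme the authors indicate. \emph{Claim A (non-emptiness of $\Dhirr \cap \oHo$):} the horizontal divisor $\Dhirr$ meets $\oHo$. I would show this by an explicit plumbing construction: starting from a suitable point in the genus $g-1$ stratum $\calH_{g-1}(m_1,\dots,m_n,-1,-1)$, the standard horizontal plumbing across the two simple poles with opposite residues produces, via the construction of~\cite{BCGGMmsds}, a smooth family of smooth differentials in $\calH_g(m_1,\dots,m_n)$ converging to a point of $\Dhirr$; a continuity argument, combined with the classification of components of $\calH_g(m_1,\dots,m_n)$ in terms of spin/hyperelliptic data, ensures that the family lies in $\calH^\circ$ for the right choice of base component, so $\Dhirr \cap \oHo$ is non-empty.

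\emph{Claim B (connectedness of $\Dhirr \cap \oHo$):} the image of this intersection in $\calH_{g-1}(m_1,\dots,m_n,-1,-1)$ lies in a single one of Boissy's~\cite{BoissyMeromCC} connected components — distinguished by spin parity, hyperelliptic type, and (in genus $1$) rotation number. I would verify that the horizontal plumbing inverse to the degeneration is compatible with each of these invariants: the spin parity and the hyperelliptic involution both extend continuously across a horizontal node of the type prescribed by $\Dhirr$, so every point of $\Dhirr \cap \oHo$ maps into the same Boissy component. Since $\oHo$ is a smooth complex orbifold and $\Dhirr$ is locally cut out by a single equation (the length parameter of the horizontal node), the intersection $\Dhirr \cap \oHo$ is an irreducible divisor, and is therefore connected.

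\emph{Claim C (every other irreducible boundary divisor of $\oHo$ meets $\Dhirr \cap \oHo$):} the remaining irreducible boundary divisors correspond to level graphs $\Gamma$ possessing either a vertical edge (a two-level graph) or a horizontal edge of non-irreducible type, such as a separating horizontal edge. For such a $\Gamma$, I would further degenerate a generic point of the corresponding divisor $D$ by collapsing an additional non-separating horizontal cylinder on some positive-genus vertex of $\Gamma$, producing a point of $D \cap \Dhirr$. In the holomorphic setting each vertex differential has only simple poles at the new marked points, so the cylinder surgery behaves cleanly and the further-degenerated limit lies in both $D$ and $\Dhirr$. The case where every vertex of $\Gamma$ has genus $0$ requires a separate combinatorial analysis of $\Gamma$, using that the level graph must carry a non-separating horizontal cycle globally.

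Putting the three claims together: since $\Dhirr \cap \oHo$ is connected (Claim B), meets every other irreducible boundary divisor of $\oHo$ (Claim C), and each such divisor is itself connected as an irreducible complex analytic variety, the entire boundary $\pHo$ is non-empty and connected; \Cref{lm:bdy-cpx-end} then yields that $\calH^\circ$ has exactly one end. The main obstacle I expect is Claim C, especially the subcase where all vertices of $\Gamma$ have genus $0$: one must ensure that every such exotic degeneration really does intersect $\Dhirr$, and this is precisely where the hypothesis $\dim \calH^\circ \ge 2$ becomes essential, for in the genuinely lower-dimensional cases enumerated earlier in the introduction the theorem can and does fail.
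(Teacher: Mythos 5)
Your three-claim architecture is exactly the paper's (you even label the claims to match), and Claims A and B are handled essentially the way the paper's second approach handles them: Boissy's classification shows that plumbing the two simple poles gives a bijection between connected components of $\calH_{g-1}(m_1,\dots,m_n,-1,-1)$ and of $\calH_g(m_1,\dots,m_n)$, which yields both non-emptiness and connectedness of $\Dhirr$ (the paper also gives an independent proof of Claim A by degenerating a deepest boundary point). One caution on your Claim B: ``locally cut out by a single equation'' does not imply irreducibility; what you actually need, and what the bijection of components gives you, is that $\Dhirr$ is the quotient of a \emph{single} connected component of $\calH_{g-1}(m_1,\dots,m_n,-1,-1)$ by the pole-swapping involution, hence connected.

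The genuine gap is in Claim C, and it comes from not exploiting the holomorphic hypothesis. First, you should perform the extra horizontal degeneration at a \emph{top-level} vertex $v$ of the two-level graph: only at the top level are there no residue conditions, so that $X_v$ lies in an honest (not generalized) stratum $\calH_{g_v}(\mu_v)$, which is moreover holomorphic since the vertical nodes are zeros of the top-level differential; Claim A then applies to $\overline{\calH_v}$ and produces the desired point of $D_i\cap\Dhirr$. Second, the two subcases you defer --- a separating horizontal edge, and a level graph all of whose vertices have genus zero --- simply cannot occur for holomorphic strata: a one-edge separating horizontal graph is excluded by the residue theorem (the twisted differential has simple poles with nonzero residues at the two branches of the node), and a top-level vertex of genus zero would have to carry a holomorphic differential on $\PP^1$, which does not exist. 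So the ``main obstacle'' you single out is vacuous here; it is real only in the meromorphic case, which is where the paper's Section 3 and the hypothesis $\dim\calH^\circ\ge 2$ enter. Relatedly, your closing assertion that the theorem ``can and does fail'' in low dimensions is incorrect for holomorphic strata: the exceptional low-dimensional strata listed in the paper's introductory remark are all of genus $0$ or meromorphic of type $(m,-m)$, and \Cref{thm:abelian} is stated and proved with no dimension restriction.
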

While the techniques Boissy uses to prove this result revolve around certain constructions with flat surfaces, we use the compactification directly. Before giving the proof, we recall the relevant properties of the boundary of $\LMS$ that were obtained in \cite{BCGGMmsds}, also referring to the information from~\cite{euler} as needed; this also allows us to introduce the relevant notation used throughout the paper.

Let $\oHo$ be the moduli space of multi-scale differentials compactifying $\calH^\circ$ (i.e.~the connected component of $\LMS$ that contains $\calH^\circ$). We will show that the boundary $\pHo\coloneqq\oHo\setminus\calH^\circ$ is connected, which by the above implies that the open stratum $\calH^\circ$ has only one end. 

Indeed, $\pHo$ is a normal crossing divisor, all of its irreducible components are divisors, and to prove that $\pHo$ is connected, it suffices to show that for any pair of irreducible components of~$\pHo$, there exists a path in~$\pHo$ connecting a generic point on one component to a generic point on the other component.

Recall that the codimension of a boundary stratum of $\oHo$ corresponding to some enhanced level graph $\Gamma$ is equal to the number of levels of $\Gamma$ below the top level, plus the number of horizontal edges in~$\Gamma$ (those whose endpoints are vertices at the same level). Thus the boundary divisors of $\oHo$ are indexed by enhanced two-level graphs~$\Gamma$ with no horizontal edges, with a given prong-matching equivalence class, and by dual graphs that only have one edge, which is horizontal. The irreducible components of boundary divisors of the first kind are called {\em vertical}, and we will enumerate them as $D_i$, where $i\in I$ runs over the set of all such irreducible components in $\pHo$ (so $I$ indexes some set of two-level enhanced graphs with no horizontal edges, and with a chosen equivalence class of prong-matchings). The boundary divisors corresponding to enhanced level graphs that only have one edge, which is a horizontal edge, are called {\em horizontal}.

Recall that the twisted differential must have simple poles at the two preimages of the node corresponding to the horizontal edge --- for degree reasons it cannot be holomorphic at these preimages of the node. In particular, if $\Gamma$ is a level graph that has only one edge, which is horizontal, then for the case of a holomorphic 1-form this edge must be non-separating, as otherwise the residues would have to be zero by the residue theorem. Thus any twisted differential on a generic point of the horizontal boundary of $\oHo$ is a meromorphic differential with two simple poles on an irreducible nodal curve with one node.  We denote $\Dhirr\subset \pHo$ (``h" for horizontal, and ``irr" because the corresponding nodal curve is irreducible) the corresponding boundary points, and call it {\em irreducible type horizontal} boundary. We are now ready to give a quick proof of the theorem.

\begin{proof}[Proof of \Cref{thm:abelian}]
First, recall that there is no prong-matching at a horizontal edge, and thus $\Dhirr$ is simply the quotient of a connected component of the subspace of the stratum $\calH_{g-1}(-1,-1,m_1,\dots,m_n)$ by the involution that interchanges the two poles. However, a priori it is not even clear that $\Dhirr$ is non-empty, and this is the first statement we need to prove.

\begin{claim}
\label{claim:A}
For any connected component $\calH^\circ$ of any stratum of holomorphic 1-forms, the boundary divisor $\Dhirr$ is non-empty.
\end{claim}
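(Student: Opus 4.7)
The plan is to exhibit a point of $\Dhirr \cap \overline{\calH^\circ}$ as the $t\to 0$ limit of shrinking a non-separating horizontal cylinder on a translation surface $(X,\omega) \in \calH^\circ$. Concretely, suppose $(X,\omega)$ carries a horizontal cylinder $C$ of circumference $c$ whose core curve is non-separating in $X$. Then by the local description of $\LMS$ near a horizontal boundary divisor in \cite{BCGGMmsds}, the one-parameter family $(X_t,\omega_t)$ obtained by shrinking the height of $C$ to $t \in (0,h]$ (leaving the complementary flat structure fixed) extends continuously to $t = 0$, with limit an irreducible nodal curve of geometric genus $g-1$ carrying a meromorphic differential with simple poles of residues $\pm c$ at the two preimages of the newly created non-separating node and with the prescribed zero orders $m_1,\dots,m_n$. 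This limit is by definition a point of $\Dhirr$, and it lies in $\overline{\calH^\circ}$ as the limit of a family contained in $\calH^\circ$.

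The remaining task is the existence step: produce, in every connected component $\calH^\circ$ of every stratum of holomorphic $1$-forms with $g \geq 1$, at least one translation surface carrying a non-separating horizontal cylinder. I would proceed in two sub-steps. First, every such $\calH^\circ$ contains a completely periodic surface, whose horizontal foliation decomposes $X$ entirely into cylinders; this follows from the classical density of periodic directions combined with the $\SL_2(\RR)$-invariance of $\calH^\circ$. Second, any completely periodic translation surface has at least one horizontal cylinder whose core is non-separating: were all cores $\gamma_1,\dots,\gamma_k$ separating, the dual graph of $X\setminus\bigcup_i\gamma_i$ would be a tree with at least two leaves, and analyzing a leaf vertex --- which would have to consist of a single half-cylinder --- produces a contradiction, because such a half-cylinder carries two boundary circles (its core $\gamma_i$ and its top-or-bottom cycle of saddle connections), whereas a leaf of the dual graph is adjacent to only one core curve.

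The hardest part is the topological lemma in the second sub-step. It relies crucially on the completely periodic hypothesis so that $X\setminus\bigcup_i\gamma_i$ decomposes entirely into half-cylinders and the leaf analysis is meaningful, and it also uses the holomorphic hypothesis in an essential way: in the meromorphic setting, the collapse of a generic cylinder may produce higher-order poles rather than the simple poles required by $\Dhirr$, so the existence of a suitable ``collapsible'' surface in each component cannot be read off directly from the flat geometry. This is precisely the reason Claims~B and~C will instead have to invoke the Lee--Wong classification \cite{lewo} of connected components of the relevant generalized boundary stratum.
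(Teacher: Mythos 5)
Your overall strategy --- find a horizontal cylinder with non-separating core in every component and degenerate it to create an irreducible horizontal node --- is sound, and it is in fact the route the paper takes for the harder meromorphic generalization (\Cref{lm:Dhirr_exists}); for \Cref{claim:A} itself the paper argues differently, by taking a deepest boundary stratum of $\pHo$ and observing that a top-level vertex either carries a horizontal edge (which can then be isolated by smoothing all other degenerations) or would lie in a $0$-dimensional holomorphic generalized stratum, which does not exist. However, two steps of your write-up are wrong as stated. First, the degeneration goes in the opposite direction: a point of the horizontal boundary of $\LMS$ arises when the \emph{modulus} of the cylinder tends to infinity, i.e.\ one stretches the cylinder, and in the limit it breaks into two half-infinite cylinders, giving the two simple poles with residues $\pm c$. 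Shrinking the height to $0$ makes the modulus tend to $0$; no annulus acquires infinite modulus, so no node forms, and the limit is instead a surface on which the cylinder has been deleted and its boundary saddle-connection cycles identified --- not a point of $\Dhirr$. The limit object you describe is the limit of the stretched family, not the shrunk one.

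Second, your ``hardest part'' has a one-line proof that makes both the complete-periodicity hypothesis and the tree/leaf analysis unnecessary: the core curve $\gamma$ of any horizontal cylinder on a holomorphic $1$-form satisfies $\int_\gamma\omega=c\neq 0$, where $c$ is the circumference, while a separating curve is nullhomologous and every period of the closed form $\omega$ over it vanishes; hence \emph{every} cylinder of an abelian differential has non-separating core. (This is exactly where holomorphicity enters; for quadratic differentials, and in genus $0$, it fails.) The leaf analysis you sketch does not work as written: a leaf component of $X\setminus\bigcup_i\gamma_i$ has exactly one boundary circle, namely one side of some $\gamma_i$, and the cycle of saddle connections at the closed end of the adjacent half-cylinder is \emph{interior} to that component (it is glued into the critical graph, possibly to itself), so it is not a second boundary circle and no contradiction arises. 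Finally, the closing remark misidentifies the meromorphic difficulty: stretching a cylinder always produces simple poles with residues $\pm c$; the actual issues are that in genus $0$ there are no non-separating curves at all, that in positive genus one must still exhibit a non-separating cylinder (done in \Cref{lm:Dhirr_exists} via Boissy's handle-bubbling), and that Lee--Wong is needed for the connectivity of $\Dhirr$ (the analog of \Cref{claim:B}), not for its non-emptiness.
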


To prove this, consider any deepest boundary point $X$ of $\pHo$, i.e. one lying in a boundary stratum that cannot be further generated.  Let $\Gamma$ be the corresponding enhanced level graph.  Let $v$ be some top-level vertex of $\Gamma$.  The differential $X_v$ here lies in some generalized stratum of differentials.  

If $X_v$ has at least one horizontal edge attached to it, then we can smooth all level transitions in~$\Gamma$ and all other horizontal edges in~$\Gamma$ besides this horizontal edge, to obtain a boundary point in $\Dhirr$  (recall that the containments of boundary strata of $\LMS$ are determined by graph degenerations; see also \cite[Prop.~6.9 and Cor.~6.10]{BeDoGr} for much more general statements).

The other case is when $X_v$ has no horizontal nodes, and thus the differential on it must be holomorphic.  In this case, the generalized stratum that $X_v$ lies in must be $0$-dimensional, since otherwise it could be further degenerated (say, using the $\GL(2,\RR)$-action), which would be a contradiction.  But there are no holomorphic $0$-dimensional strata, so we are done.  

While the above is a direct proof of \Cref{claim:A}, there is of course another approach to this statement. Indeed, Boissy \cite{BoissyMeromCC} classified
connected components of strata of meromorphic differentials, and in particular showed that the connected components of the stratum $\calH_{g-1}(m_1,\dots,m_n,-1,-1)$ are in bijection with connected components of the holomorphic stratum $\calH_g(m_1,\dots,m_n)$. More precisely, he shows that plumbing together the two simple poles for a differential in~$\calH_{g-1}(m_1,\dots,m_n,-1,-1)$ gives a differential in~$\calH_g(m_1,\dots,m_n)$ --- and since for both strata the connected components are classified by the same spin and hyperellipticity invariants, this plumbing establishes a bijection on the sets of their connected components. This implies \Cref{claim:A} and also

\begin{claim} 
\label{claim:B}
The irreducible type horizontal boundary divisor $\Dhirr\subset\pHo$ in a holomorphic stratum is connected.
\end{claim}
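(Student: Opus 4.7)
The plan is to identify $\Dhirr$ with the image of a single connected component of the stratum $\calH_{g-1}(m_1,\dots,m_n,-1,-1)$ under the natural forgetful map, and then invoke the fact that continuous images of connected spaces are connected. The key input is Boissy's bijection, recalled in the paragraph just above the claim, between connected components of that stratum and of the holomorphic stratum $\calH_g(m_1,\dots,m_n)$.

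First I would unpack what a generic point of $\Dhirr$ parametrizes: an irreducible nodal curve of geometric genus $g-1$ with a single non-separating node, together with a projectivized meromorphic differential whose pullback to the normalization $\widetilde C$ has zeroes of orders $m_1,\dots,m_n$ at the marked points and two simple poles at the preimages $p^+, p^-$ of the node with opposite residues. Labeling these preimages yields a point of $\calH_{g-1}(m_1,\dots,m_n,-1,-1)$, and since there is no prong matching at a horizontal edge, forgetting the labeling via the involution swapping $p^+$ and $p^-$ produces $\Dhirr$. So $\Dhirr$ is a quotient of some union of connected components of $\calH_{g-1}(m_1,\dots,m_n,-1,-1)$.

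Next I would argue that only one such connected component actually contributes to $\Dhirr\subset\oHo$. The deformation that carries a point of $\Dhirr$ back into $\calH^\circ$ is precisely plumbing at the horizontal node; because the two residues are opposite, the plumbed differential is holomorphic and remains in $\calH^\circ$. This is exactly the plumbing operation that underlies Boissy's bijection. Since that bijection is, well, a bijection, there is a unique connected component $\calH^\circ_{-1,-1}$ of $\calH_{g-1}(m_1,\dots,m_n,-1,-1)$ whose plumbing lies in $\calH^\circ$, and so $\Dhirr$ is the image of that single connected component under the pole-swap quotient map, hence connected.

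The main obstacle is checking that the plumbing operation implicit in the multi-scale compactification $\LMS$ really agrees with the plumbing operation used in Boissy's classification --- in particular, that smoothing a horizontal node in $\LMS$ preserves all the invariants (spin parity, hyperellipticity) that distinguish connected components, so that the unique component of $\calH_{g-1}(m_1,\dots,m_n,-1,-1)$ plumbing into $\calH^\circ$ is exactly the one appearing as the source of $\Dhirr$. Once this compatibility is established, the rest is formal; note also that \Cref{claim:A}, already proved in the excerpt, guarantees that $\calH^\circ_{-1,-1}$ is non-empty, so the argument is not vacuous.
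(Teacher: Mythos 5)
Your proposal is correct and follows essentially the same route as the paper: the paper likewise observes that $\Dhirr$ is the quotient by the pole-swapping involution of a connected component of $\calH_{g-1}(m_1,\dots,m_n,-1,-1)$, and deduces connectedness from Boissy's plumbing bijection between connected components of that stratum and of $\calH_g(m_1,\dots,m_n)$ (both being classified by the same spin and hyperellipticity invariants). The compatibility issue you flag at the end is real but is exactly what Boissy's matching of invariants resolves, and the paper treats it the same way.
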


To complete the proof of \Cref{thm:abelian}, we need the following statement:

\begin{claim}
\label{claim:C}
The irreducible type horizontal boundary divisor $\Dhirr$ has a non-empty intersection with any irreducible component~$D_i$ of the vertical boundary of~$\pHo$.    
\end{claim}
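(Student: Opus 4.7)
My plan is to produce, for each vertical boundary divisor $D_i$ indexed by an enhanced two-level graph $\Gamma_i$, a non-empty deeper stratum $D_{\Gamma'}\subseteq D_i\cap \Dhirr$, where $\Gamma'$ is obtained from $\Gamma_i$ by adding a single horizontal edge. The starting observation is combinatorial: the underlying dual graph of $\Gamma_i$ is connected, so contracting its vertical edges collapses all vertices into one. Hence, for any $\Gamma'$ obtained from $\Gamma_i$ by inserting one horizontal edge $e$, contracting the vertical edges of $\Gamma'$ produces a single vertex with $e$ as a self-loop. This identifies $\Gamma'$ as a common refinement of $\Gamma_i$ (by smoothing $e$) and of the one-vertex, one-self-loop graph indexing $\Dhirr$ (by contracting the verticals), giving $D_{\Gamma'}\subseteq D_i\cap \Dhirr$ for any valid choice of $e$. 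It thus suffices to exhibit a non-empty $D_{\Gamma'}$ of this form.

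The key auxiliary fact will be that in the holomorphic setting, every bottom-level vertex $v_B$ of $\Gamma_i$ has $g_{v_B}\geq 1$. At each marked point of $v_B$ the order of the differential is $m_i\geq 0$, and at each preimage of an incident vertical edge the order is a non-negative zero, since vertical edges carry poles of order $\geq 2$ on the top side, matched by zeros on the bottom side via the compatibility relation $\ord^{+}+\ord^{-}=-2$; the case $\ord^{+}=\ord^{-}=-1$ is reserved for horizontal edges as recalled earlier in the excerpt. Summing these non-negative orders on $X_{v_B}$ yields $2g_{v_B}-2\geq 0$, hence $g_{v_B}\geq 1$.

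Given this, I will pick any bottom-level vertex $v$ of $\Gamma_i$ and let $\Gamma'$ be the enhanced level graph obtained from $\Gamma_i$ by adding a horizontal self-loop at $v$; this replaces $v$ by a new vertex $v'$ of genus $g_v-1$ with signature $\mu_v\cup\{-1,-1\}$. To verify $D_{\Gamma'}\neq\emptyset$, I take any twisted differential $(X_v,\omega_v)$ in the generalized stratum at $v$; since $g_v\geq 1$, pinching a non-separating loop on $X_v$ yields a differential on an irreducible nodal curve of arithmetic genus $g_v$ and geometric genus $g_v-1$ with simple poles at the two preimages of the new node, whose residues are automatically opposite by the residue theorem on the irreducible nodal curve. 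Keeping the original data at the other vertices of $\Gamma_i$ assembles this into a point of $D_{\Gamma'}$, proving the claim. The main potential obstacle — which does fail in the general meromorphic case and motivates the more elaborate arguments that follow — is precisely this bottom-vertex positivity $g_{v_B}\geq 1$: for meromorphic strata, bottom-level vertices can have genus zero, and then one cannot uniformly produce the desired horizontal self-loop by pinching a cycle at a single vertex.
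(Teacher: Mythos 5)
Your overall strategy---insert a single horizontal self-loop at one vertex of $\Gamma_i$ to produce a common degeneration lying in $D_i\cap\Dhirr$---is exactly the paper's, but your ``key auxiliary fact'' is false, and the error comes from reversing the order convention at vertical nodes. For a vertical edge $e$ the compatibility relation $\ord^{+}+\ord^{-}=-2$ is resolved with the \emph{top} preimage carrying a zero of order $\kappa_e-1\ge 0$ and the \emph{bottom} preimage carrying a pole of order $-\kappa_e-1\le -2$ (this is stated implicitly in the paper's proof of \Cref{lm:no-resid-cond}: edges going down from a top-level vertex ``correspond to points where the order of the twisted differential is non-negative''). Consequently it is the top-level components that lie in holomorphic strata, while bottom-level components acquire higher-order poles and may well have genus zero: the degeneration of \Cref{fig:MergeZeroes}, with all marked zeros colliding onto a genus-zero bottom vertex attached to a genus-$g$ top vertex, is already a counterexample to your claim that $g_{v_B}\ge 1$. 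Your computation $\sum(\text{orders})=2g_{v_B}-2\ge 0$ therefore fails at the bottom level, and the vertex at which you propose to pinch a non-separating loop need not support one.

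There is a second, independent obstruction to working at the bottom level: bottom-level components are subject to the global residue conditions (in the holomorphic case every top-level vertex imposes one, since there are no marked poles), so the differential at a bottom vertex lives in a \emph{generalized} stratum, where the non-emptiness of an irreducible horizontal degeneration is not covered by \Cref{claim:A} (indeed the paper explicitly says it does not know whether \Cref{lm:Dhirr_exists} extends to generalized strata); relatedly, the residues at the two new simple poles would not be ``automatically opposite,'' since the higher-order poles at the vertical nodes can carry nonzero residues. Both problems disappear if you run your argument at a \emph{top}-level vertex $v$: there the node preimages are zeros, so $\calH_{g_v}(\mu_v)$ is an honest holomorphic stratum with no residue conditions, $g_v\ge 1$ follows from your degree count, and \Cref{claim:A} (or \Cref{claim:B}) supplies the non-empty $\Dhirr[v]\subset\partial\overline{\calH_v}$ needed to insert the horizontal loop. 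That is precisely the paper's proof; note also that the paper invokes the non-emptiness of $\Dhirr[v]$ rather than ``pinch any non-separating loop,'' since an arbitrary topological pinch is not a priori realized by a horizontal flat-geometric degeneration staying in the closure of the given connected component.
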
 

Once this is established, this means that $\Dhirr\cup D_i$ is connected for each $i$, and thus $\pHo=\cup_{i\in I} (D_i\cup\Dhirr)$ is also connected, as a union of connected topological spaces, any pair of which intersects --- so that the theorem is proven.

To prove \Cref{claim:C}, we proceed similarly to the direct approach to \Cref{claim:A} above. Recall that $D_i$ parameterizes a connected component of some product of strata, with prongs chosen and residue conditions imposed, with one factor for each vertex of the corresponding two-level graph $\Gamma$, which has no horizontal edges. In particular, the factor corresponding to $X_v$ for some top level vertex~$v$ of~$\Gamma$ is a connected component $\calH_v$ of some stratum~$\calH_{g_v}(\mu_v)$. Since $\mu_v$ is a union of a subset of $\mu$ and possibly some prescribed multiplicities of zeroes at the vertical nodes, $\calH_{g_v}(\mu_v)$ is a connected component of some stratum of {\em holomorphic} 1-forms, with some choice of prongs (and with no residue conditions as it is at the top level).

But now by \Cref{claim:B}, we know that in the multi-scale compactification $\overline{\calH_v}$ of~$\calH_v$ there is a non-empty irreducible type horizontal boundary divisor, which we denote $\Dhirr[v]\subset\partial\overline{\calH_v}$. But what this means is that we can further degenerate a generic point of $D_i$ by inserting a horizontal loop at the vertex $v$, and thus the resulting enhanced level graph will also lie in $\Dhirr$, so that we have constructed a point in the intersection $D_i\cap\Dhirr$.
\end{proof}

\section{Ends of general meromorphic strata}
To prove the result for general meromorphic strata, we will have to prove or bypass the analogs of each of \Cref{claim:A}, \Cref{claim:B}, \Cref{claim:C} in the proof above. We first explain the ideas of dealing with each of these, before proceeding with the actual proof.

\begin{itemize}
    \item Analog of \Cref{claim:A}: This statement is simply false for meromorphic strata in genus zero (recall that there do no exist holomorphic strata in genus zero). Indeed, the dual graph of a stable curve of genus zero must be a tree, which in particular cannot contain any loops. However, in \Cref{lm:Dhirr_exists} below we will show that for any connected component of any meromorphic stratum in any {\em positive} genus the irreducible type horizontal boundary divisor $\Dhirr$ is still non-empty.

    \item Analog of \Cref{claim:B}: The boundary of $\pHo$ in general will consist of vertical boundary divisors $D_i$, the irreducible type horizontal boundary divisor $\Dhirr$, but also boundary divisors $D_j^h$, a generic point of one of which corresponds to a level graph with two vertices and one (separating) horizontal edge connecting them. In~\Cref{lm:Dh_inter_horiz} we will show that $\Dhirr\cap D_j^h\ne\emptyset$ for any~$j$, while the recent results of Lee and Wong \cite{lewo} in particular imply that~$\Dhirr$ is irreducible for any $g>1$, as we record in~\Cref{thm:Dh_isirr}. In the appendix, Myeongjae Lee shows that connected components of $\Dhirr$ in genus 1 can be connected to each other within $\pHo$.

    \item Analog of \Cref{claim:C}: If we try to show that $\Dhirr$ intersects every vertical boundary divisor $D_i\subset\pHo$ by applying the same argument as for the holomorphic case, by \Cref{claim:B} it will work unless every vertex $v\in V(\Gamma_i)$ of the corresponding vertical two-level graph $\Gamma_i$ corresponds to a curve of genus~$g_v=0$. The case of vertical two-level graphs with all vertices of genus zero will be a special case for us, and we deal with it by applying further level degenerations and undegenerations, to create a vertex with higher genus. In this process we will connect the corresponding vertical boundary divisor of $\pHo$ to a different vertical boundary divisor, which will then intersect $\Dhirr$, completing the proof. 

\end{itemize}

\smallskip
We now proceed to the proof; we first show that the irreducible type horizontal boundary divisor exists in general, proving the analog of \Cref{claim:A}. This, and the proof of analog of \Cref{claim:B} due to \cite{lewo}, are the two elements of our proof that use flat-geometric constructions, rather than navigating the boundary of the multi-scale compactification. 
\begin{lm}\label{lm:Dhirr_exists}
For any $g>0$, and for any connected component $\calH^\circ$ of any {\em possibly meromorphic} stratum $\calH_g(m_1,\dots,m_n)$, there exists an irreducible type horizontal boundary divisor: $\emptyset\ne\Dhirr\subset\pHo$.
\end{lm}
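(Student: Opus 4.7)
The plan is flat-geometric: exhibit a point of $\Dhirr$ inside $\pHo$ by constructing a differential in $\calH^\circ$ that admits a horizontal cylinder whose core curve is non-separating on the underlying Riemann surface, and then pinching that cylinder inside the multi-scale compactification $\oHo$.

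Concretely, I would first take any $(X,\omega) \in \calH^\circ$. Since $g > 0$, the first homology $H_1(X;\ZZ)$ is non-trivial, so non-separating simple closed curves on $X$ exist. Using the local period coordinates on $\calH^\circ$, in which the absolute periods of $\omega$ appear among the coordinates, I would deform $(X,\omega)$ within $\calH^\circ$ so that some non-separating simple closed curve $\gamma \subset X$ becomes the core curve of a horizontal flat cylinder $C$ of circumference $c \coloneqq \int_\gamma \omega \neq 0$ and positive height $h$. Then, letting the modulus $h/|c| \to \infty$ while keeping the holonomy $c$ fixed (which in projective coordinates on $\oHo$ amounts to rescaling so that the complement of $C$ shrinks relative to the height of $C$), the resulting family converges, by the plumbing description of $\LMS$ in \cite{BCGGMmsds}, to a point of $\pHo$. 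The limit is supported on the irreducible nodal curve of geometric genus $g-1$ obtained by pinching $\gamma$, and carries a twisted differential with simple poles of residues $\pm c/(2\pi i)$ at the two preimages of the node. Because $\gamma$ is non-separating, this limit is a point of $\Dhirr$.

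The main obstacle is the initial step: verifying that every connected component of every positive-genus meromorphic stratum contains a flat surface with a horizontal cylinder whose core curve is non-separating. For holomorphic strata this is standard, since every component admits a square-tiled representative which is cylinder-decomposed, and non-separating core curves are easy to arrange there. For general meromorphic strata one could proceed via Boissy's classification of connected components in \cite{BoissyMeromCC}, exhibiting explicit model surfaces in each component (distinguished by hyperellipticity, spin parity, or rotation number) that visibly admit such a cylinder. Alternatively, one can begin at a deepest boundary point $Y$ of $\pHo$: if some top-level vertex of the associated enhanced level graph carries a horizontal loop, then smoothing the remaining edges directly yields a point of $\Dhirr$ exactly as in the proof of \Cref{claim:A}; otherwise, a finer analysis of the flat geometry of a top-level stratum produces the desired cylinder, crucially using that $g > 0$.
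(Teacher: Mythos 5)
Your overall strategy---produce a flat surface in $\calH^\circ$ with a horizontal cylinder whose core curve is non-separating, then stretch that cylinder to infinite modulus to land in $\Dhirr$---is exactly the strategy of the paper. The problem is that the entire content of the lemma is concentrated in the step you label ``the main obstacle,'' and you do not actually carry it out. Saying that ``one could proceed via Boissy's classification, exhibiting explicit model surfaces in each component that visibly admit such a cylinder'' is a plan, not a proof: for meromorphic strata the components are distinguished by hyperellipticity, spin, and rotation number, and verifying a non-separating cylinder in a model surface for each case is a nontrivial case analysis that you have not done. Your first route also leans on an unjustified deformation claim: it is not true that period coordinates let you turn an arbitrary non-separating curve into the core of a horizontal cylinder, and for meromorphic (infinite-area) flat surfaces even the existence of cylinders is not automatic. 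Your fallback via a deepest boundary point fails as well: unlike the holomorphic case, meromorphic strata admit $0$-dimensional terminal pieces (genus-zero strata) with no horizontal edges, so ``a finer analysis\dots crucially using that $g>0$'' is precisely where the argument must go, and nothing is supplied there.

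The paper closes this gap with two specific inputs from Boissy \cite{BoissyMeromCC} rather than the bare classification. First, \cite[Prop.~6.1]{BoissyMeromCC}: every connected component of a \emph{minimal} stratum (one zero, arbitrary poles) contains a surface obtained by bubbling a handle off a genus $g-1$ surface; the bubbled handle is the desired non-separating cylinder, and stretching it gives a point of $\Dhirr$ for minimal strata. Second, \cite[Prop.~7.1]{BoissyMeromCC}: every component $\calH^\circ$ of an arbitrary stratum is adjacent to a component $\calH'$ of a minimal stratum, realized inside $\pHo$ as a two-level boundary stratum with all zeroes merged onto a genus-zero bottom vertex. One then degenerates the top-level vertex (parameterized by $\calH'$) to acquire a horizontal loop and smooths the level transition to produce a point of $\Dhirr\subset\pHo$. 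If you want to salvage your write-up, you should replace the ``explicit model surfaces'' step with these two citations and the accompanying boundary navigation.
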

\begin{rem}
To further understand the topology of the strata via the homology of the boundary complex, it is natural to try to extend our work to the fully generalized strata, i.e.~allowing disconnected Riemann surfaces, and imposing suitable residue conditions. The proof below does not directly apply in this case, as the flat deformation constructed, of suitably stretching the handle that was bubbling off, can change the residues, and thus may not preserve the residue conditions imposed. We do not know if this Lemma holds for all generalized boundary strata.
\end{rem}

While below we give a direct proof of the lemma, by degeneration, we note that this result can also be recovered, with some extra work, from the results of \cite{lewo} on classification of connected components of generalized strata. Indeed, by \cite{lewo} one can enumerate the connected components of $\calH_{g-1}(m_1,\dots,m_n\mid -1,-1)$.  Keeping track of the geometric invariants determining the component, they are observed to be in bijection with connected components of~$\calH_g(m_1,\dots,m_n)$, similar to the holomorphic case.

\begin{proof}
For $\calH^\circ$ a component of a minimal stratum (i.e.~just a single zero, and any number of poles), Boissy ~\cite[Prop.~6.1]{BoissyMeromCC} gives that there is a surface in $\calH^\circ$ that is obtained by bubbling a handle from a genus~$g-1$ surface. This surface must then have a non-separating cylinder (the handle). Stretching this cylinder so that its modulus goes to infinity yields a limit point in $\Dhirr$. 

\begin{figure}[h]
    \centering
    \includegraphics[scale=0.7]{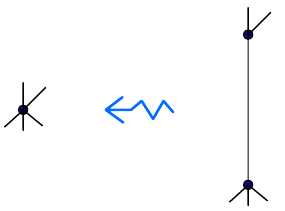}
    \caption{Level graphs corresponding to a vertical degeneration that merges all the zeroes together. The arrow $\leftsquigarrow$ denotes the undegeneration of differentials; it points in the direction of the corresponding morphism of enhanced level graphs, while the differentials specialize in the opposite direction.}
    \label{fig:MergeZeroes}
    \end{figure}

For a connected component $\calH^\circ$ of an arbitrary stratum, by~\cite[Prop.~7.1]{BoissyMeromCC}~$\calH^\circ$ is adjacent to some component $\calH'$ of a minimal stratum.  This means $\calH'$ can be thought of as lying in $\pHo$\footnote{Indeed, in terms of the multi-scale compactification, we are considering a boundary stratum where all the marked points have collided; the dual graph has two levels, with a genus~$g$ vertex with no marked points on the top level, and a genus zero vertex with all the marked points at the bottom level, as in \Cref{fig:MergeZeroes}}. By the above,~$\calH'$ admits an irreducible horizontal degeneration. Since~$\calH'$ is thought of as parameterizing the stratum at a top level vertex, we can consider the degeneration in $\pHo$ where the vertex corresponding to $\calH'$ has been degenerated to add an extra horizontal loop. Smoothing out the level passage from $0$ to $-1$ in $\LMS$ then constructs a point in $\Dhirr\subset\LMS$.
\end{proof}
Now that we know that $\Dhirr\ne\emptyset$, we ask whether it is connected. While in the holomorphic case the connectedness follows from Boissy's classification \cite{BoissyMeromCC} of connected components of meromorphic strata with two simple poles, and a number of zeroes, in the meromorphic case $\Dhirr$ is a generalized stratum of differentials, as the condition of opposite residues at the two new simple poles needs to be imposed. The recent work of Lee and Wong \cite{lewo} investigates the connected components of arbitrary generalized strata of differentials, expanding upon and generalizing the techniques of Lee \cite{Lee} for residueless strata. In particular, Theorem~1.4 and Theorem~1.6 in \cite{lewo} for the case of generalized strata where the only residue condition is that a pair of simple poles have opposite residues can be rephrased to give the analog of \Cref{claim:B} for the meromorphic case:
\begin{thm}[Lee-Wong]\label{thm:Dh_isirr}
For any $g>1$ and for any connected component $\calH^\circ$ of any meromorphic stratum, the irreducible type horizontal boundary divisor $\Dhirr\subset\overline\calH^\circ$ is irreducible. 
\end{thm}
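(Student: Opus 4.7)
The plan is to deduce this statement from the Lee--Wong classification \cite{lewo} of connected components of arbitrary generalized strata. My approach has three steps: first, identify $\Dhirr\cap\oHo$ with a quotient of a union of connected components of a specific generalized stratum; second, invoke the classification of \cite{lewo} to enumerate those components; and third, show that plumbing the horizontal node induces a bijection between these components and the components of the ambient meromorphic stratum, so that exactly one component lies over the chosen $\calH^\circ$.

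First, I would make the identification explicit. By the construction of the multi-scale compactification, a generic point of $\Dhirr$ corresponds to an enhanced level graph with a single vertex of geometric genus $g-1$ and a single non-separating horizontal edge, on which the twisted differential has simple poles at the two preimages of the node with residues summing to zero (as forced by the residue theorem on the normalization). Since there is no prong-matching at a horizontal edge, $\Dhirr\cap\oHo$ is naturally isomorphic to the quotient by the pole-swapping involution of a union of connected components of the generalized stratum $\calH_{g-1}(m_1,\ldots,m_n\mid -1,-1)$.

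The heart of the argument is to invoke Theorems~1.4 and~1.6 of \cite{lewo}, which classify the connected components of any generalized stratum by spin parity and hyperelliptic structure (together with rotation number, which is only nontrivial in genus~$1$; this is the reason the hypothesis $g>1$ is used). Specializing these theorems to the generalized stratum $\calH_{g-1}(m_1,\ldots,m_n\mid -1,-1)$ yields a finite list of components, each labeled by a pair of discrete invariants.

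The main obstacle is to verify that the plumbing of the horizontal node induces a bijection between these Lee--Wong labels and the Boissy labels \cite{BoissyMeromCC} classifying the components of the ambient meromorphic stratum $\calH_g(m_1,\ldots,m_n)$. Concretely, one needs to check that spin parity and hyperelliptic structure are each preserved and reflected by plumbing a pair of opposite-residue simple poles, extending Boissy's own plumbing argument from the holomorphic case to the meromorphic setting. Once this bijection is established, precisely one connected component of the generalized stratum sits inside $\oHo$; since generalized strata are smooth complex orbifolds, their connected components are irreducible, and therefore $\Dhirr\cap\oHo$ is irreducible.
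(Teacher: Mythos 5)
Your proposal is correct and follows essentially the same route as the paper: the paper likewise treats this theorem as a direct consequence of Theorems~1.4 and~1.6 of \cite{lewo} applied to the generalized stratum $\calH_{g-1}(m_1,\dots,m_n\mid -1,-1)$, combined with the observation (stated in the remark after \Cref{lm:Dhirr_exists}) that keeping track of the classifying invariants shows these components are in bijection, via plumbing, with the components of $\calH_g(m_1,\dots,m_n)$. The bijection check that you flag as the ``main obstacle'' is exactly the step the paper also leaves as ``some extra work,'' so your sketch matches the intended argument in both structure and level of detail.
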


As warm-up for the proof of the main theorem, we first show that $\Dhirr$ intersects all other horizontal boundary divisors, in the general meromorphic case.

\begin{lm}\label{lm:Dh_inter_horiz}
For any $g>0$, and any $D_j^h\subset\pHo$, an irreducible component of a horizontal boundary divisor, the intersection $D_j^h\cap\Dhirr$ is non-empty,
\end{lm}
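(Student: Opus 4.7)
The strategy is to exhibit a codimension-two boundary stratum of $\oHo$ that lies simultaneously in $D_j^h$ and in $\Dhirr$; its existence immediately implies non-emptiness of the intersection. I will construct an enhanced level graph $\Gamma'$ obtained from the two-vertex separating graph $\Gamma_j$ indexing $D_j^h$ by attaching one additional non-separating horizontal loop $\ell$ to whichever of its two vertices has positive genus. Since $\Gamma'$ has all vertices at the top level and exactly two horizontal edges, its boundary stratum has codimension $0+2=2$, and $\Gamma'$ undegenerates both to $\Gamma_j$ (by contracting $\ell$) and to the single-loop graph indexing $\Dhirr$ (by contracting the separating edge $e$), so the stratum sits in the intersection.

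Let $(X_1,\omega_1)\sqcup(X_2,\omega_2)$ be a generic point of $D_j^h$: a stable curve with one separating node whose preimages carry simple poles of opposite residues, with $g_1+g_2=g$. Since $g>0$, we may assume $g_1\ge 1$. The differential $\omega_1$ lies in a connected component of the meromorphic stratum $\calH_{g_1}(m_{i_1},\ldots,m_{i_k},-1)$, where the $m_{i_j}$ are the orders of $\omega_1$ at the marked points on the $X_1$-side and $-1$ is the order at the preimage of the separating node. Applying \Cref{lm:Dhirr_exists} to this component yields an irreducible-type horizontal degeneration $(X_1',\omega_1')$ whose dual graph differs from that of $X_1$ by a single non-separating node; this provides the desired loop $\ell$ on the $X_1$-vertex of $\Gamma'$. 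Pairing $(X_1',\omega_1')$ with the unchanged $(X_2,\omega_2)$ across $e$ produces a candidate twisted differential on $\Gamma'$.

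The main obstacle --- and the only subtlety beyond invoking \Cref{lm:Dhirr_exists} --- is ensuring that the residue at the pole on the $X_1$-side adjacent to $e$ remains opposite to the residue on the $X_2$-side after the degeneration. There are two complementary ways to resolve this. One is to appeal to the flat-geometric construction underlying \Cref{lm:Dhirr_exists}, in which the degeneration is achieved by stretching a non-separating horizontal cylinder on $X_1$; such a stretching is supported away from the pole at $e$ and is residue-preserving at every other marked point, so the opposite-residue condition is maintained. Alternatively, one may simultaneously move $(X_2,\omega_2)$ within its connected component so that its residue at the separating pole equals $-\mathrm{res}(\omega_1')$, since residues at simple poles of meromorphic differentials can generically be varied freely within a connected component, subject only to the residue theorem, which is automatically respected by consistency across the two sides. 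Either route produces the required point in $D_j^h\cap\Dhirr$.
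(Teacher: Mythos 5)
Your argument follows the same route as the paper: reduce to the separating two-vertex graph, apply \Cref{lm:Dhirr_exists} to the positive-genus side to insert a non-separating horizontal loop there, and observe that the resulting two-horizontal-edge graph smooths both to $D_j^h$ and to $\Dhirr$. You also correctly isolate the one real subtlety, the opposite-residue condition at the separating edge, and your second resolution is the one the paper uses, in its simplest form: since the residue at a simple pole is automatically nonzero, one merely rescales $\omega_2$ by the unique constant in $\CC^*$ making its residue at the node opposite to that of $\omega_1'$ (rescaling stays within the connected component $\calH_2$), with no need for the stronger claim that residues vary freely within a component. Your first resolution, however, is not justified: \Cref{lm:Dhirr_exists} is a pure existence statement producing \emph{some} point of $\Dhirr[1]$, not a residue-preserving degeneration of your chosen $(X_1,\omega_1)$ --- its proof passes through merging all the zeroes and Boissy's bubbling-a-handle construction, and the remark in the paper immediately following that lemma explicitly cautions that the flat deformations involved can change residues (this is exactly why the lemma is not claimed for generalized strata with residue conditions). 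So route (a) should be dropped and route (b) kept. A final small point: the paper begins by invoking \Cref{thm:Dh_isirr} to rule out the possibility that $D_j^h$ is itself of irreducible type (in which case it coincides with $\Dhirr$ and there is nothing to prove); you implicitly assume $D_j^h$ is of separating type, which matches the paper's notational convention but deserves a sentence.
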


\begin{proof}
By~\Cref{thm:Dh_isirr}, $\Dhirr$ is irreducible, and thus the nodal Riemann surface underlying a generic point of $D_j^h$ cannot be irreducible. Thus the generic point of~$D_j^h$ must have a dual graph as in \Cref{fig:HorizDegens}: it has two vertices, of genera $g_1$ and $g_2$, with marked zeroes and poles of multiplicities $\mu_1$ and $\mu_2$, and another simple pole at each of the preimages of the node, connected by one horizontal edge, such that $g=g_1+g_2$ and $\mu=\mu_1\sqcup \mu_2$.
\begin{figure}[h]
    \centering
    \includegraphics[scale=0.7]{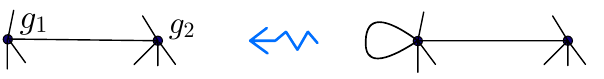}
    \caption{Level graphs corresponding to a degeneration of a point of $D_j^h$ to a point of $D_j^h\cap\Dhirr$.}
    \label{fig:HorizDegens}
\end{figure}
Denoting $\calH_i\coloneqq\calH_{g_i}(\mu_i,-1)$, the corresponding stratum of twisted differentials is then a connected component of the subset of the product $\calH_1\times\calH_2$ given by the condition that the two residues at the two preimages of the horizontal node are opposite (recall that there are no prongs at horizontal edges). Since $g=g_1+g_2>0$, without loss of generality assume $g_1>0$. But then by~\Cref{lm:Dhirr_exists} there exists an irreducible type horizontal boundary divisor $\emptyset\ne\Dhirr[1]\subset\overline{\calH}^\circ_1$. By definition of $\Dhirr\subset\oHo$, any point in $\Dhirr[1]$, together with any point in $\calH_2$, with the differential rescaled to have the opposite residue at the horizontal node, is then a point of $\Dhirr\cap D_j^h$.
\end{proof}

Recall that $D_i$ denotes an irreducible component of a vertical boundary divisor in  $\pHo$.
\begin{lm}
    \label{lm:horiz-vert}
    Suppose $\calH^\circ$ has dimension $d\ge 2$ and parameterizes surfaces of genus $g>0$. Then for a generic point in $D_i$, there is a path in $\pHo$ starting there and ending in ~$\Dhirr$. 
\end{lm}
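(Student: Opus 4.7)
The plan is to show, for each irreducible vertical boundary divisor $D_i \subset \pHo$, that the closure of $D_i$ meets $\Dhirr$---either directly, or through a chain of codim-$2$ boundary strata. Since $D_i$ is itself path-connected (as an irreducible complex orbifold), such an intersection yields the required path from a generic point of $D_i$ into $\Dhirr$. The argument splits into two cases according to the structure of the enhanced two-level graph $\Gamma_i$ corresponding to a generic point of $D_i$.

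\emph{Case 1: some vertex $v$ of $\Gamma_i$ has positive genus $g_v > 0$.} I would follow the argument of \Cref{claim:C} in the proof of \Cref{thm:abelian}. By \Cref{lm:Dhirr_exists} applied to the stratum component $\calH_v$ at $v$, there is a non-empty irreducible type horizontal boundary divisor $\Dhirr[v] \subset \partial\overline{\calH_v}$; attaching the corresponding horizontal self-loop at $v$ produces a codim-$2$ enhanced level graph that lies in the closures of both $D_i$ and $\Dhirr$, so $D_i \cap \Dhirr \neq \emptyset$.

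\emph{Case 2: all vertices of $\Gamma_i$ have genus zero.} The hypothesis $g > 0$ then forces $\Gamma_i$ to have positive first Betti number, i.e.~to contain cycles. I would construct a codim-$2$ enhanced level graph $\Gamma''$ lying in $D_i \cap \Dhirr$ directly, as follows. Pick a non-cut vertex $v$ of $\Gamma_i$ (such a vertex exists because $\Gamma_i$ contains a cycle), say on the top level, and split $v$ into two genus-$0$ vertices $v^a, v^b$ at the same level, joined by a new horizontal edge $e_*$, distributing $v$'s vertical edges and marked zeros/poles between $v^a$ and $v^b$ so that both remain stable. Smoothing $e_*$ then recovers $\Gamma_i$, placing $\Gamma''$ in the closure of $D_i$. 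Smoothing the level transition of $\Gamma''$ instead smooths every vertical edge at once; since $v$ was chosen non-cut, the vertices $v^a$ and $v^b$ remain in a common connected component of the vertical subgraph, and all vertices of $\Gamma''$ merge into a single component. An arithmetic-genus count then shows that the resulting irreducible nodal curve has geometric genus $g-1$ with $e_*$ becoming a horizontal self-loop---that is, a generic point of $\Dhirr$.

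The main obstacle will be the stability check in Case 2: each of $v^a$ and $v^b$ must carry at least three special points (incident edges plus marked zeros/poles), so the split can fail when every non-cut vertex of $\Gamma_i$ is only minimally stable (for instance of degree $2$ with a single marked zero). In such extremal situations I would first pass through an auxiliary codim-$2$ stratum---obtained by a preliminary vertex-split or level refinement within the closure of $D_i$---before executing the final split, potentially reaching $\Dhirr$ via an intermediate horizontal boundary divisor and \Cref{lm:Dh_inter_horiz}. One also needs to check that the prong data on vertical edges and any residue conditions cutting out $\calH^\circ$ transfer consistently through the split.
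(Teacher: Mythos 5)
Your Case 1 is essentially the paper's route (its \Cref{lm:pos-genus}): attach a horizontal self-loop at a positive-genus vertex using \Cref{lm:Dhirr_exists}. One point you defer but must address: if the only positive-genus vertex sits at the bottom level, the differential there is a priori subject to a global residue condition, so \Cref{lm:Dhirr_exists} (which is stated only for ordinary, non-generalized strata, and which the authors explicitly do not know how to extend to generalized ones) does not directly apply. The paper resolves this with \Cref{lm:no-resid-cond}: if no top-level vertex has positive genus, then every top-level vertex is a genus-zero vertex carrying a marked pole, hence imposes no residue condition, and the bottom-level vertex again parameterizes an ordinary stratum. Without this observation your Case 1 is incomplete.

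Your Case 2 has a genuine gap. Splitting a genus-zero vertex $v$ into $v^a,v^b$ joined by a horizontal edge forces a simple pole at each branch of the new node, so the orders of the special points of $v$ (marked points and vertical nodes) must be partitioned into two subsets each summing to $-1$. This is an arithmetic obstruction, not the stability issue you flag as the ``main obstacle,'' and it can fail at \emph{every} vertex of $\Gamma_i$. For example, in $\calH_1(2,2,-4)$ (dimension $2$) take the two-level graph with a genus-zero top vertex carrying the pole of order $-4$ and two downward edges with top-end orders $0$ and $2$, and a genus-zero bottom vertex carrying the two zeros of order $2$ and the edge-ends of orders $-2$ and $-4$: every order at every special point is even, so no subset sums to $-1$ and neither vertex admits your split, even though the bottom vertex has valence $4$ and both vertices are non-cut. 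Your fallback (a preliminary split or level refinement) is not developed and does not obviously remove a parity obstruction. The paper avoids this entirely by never attempting a horizontal degeneration of a genus-zero component: in its Case 2 it first uses vertical degenerations and undegenerations (pulling a vertex to a new level, then collapsing a level transition so as to contract a cycle between two genus-zero vertices) to travel through a chain of vertical divisors $D_i, D_{i'},\dots$ until some vertex acquires positive genus, and only then creates the horizontal loop via the flat-geometric handle-bubbling behind \Cref{lm:Dhirr_exists}. Note also that the conclusion of the lemma is only a path in $\pHo$ from $D_i$ to $\Dhirr$, not $\overline{D_i}\cap\Dhirr\neq\emptyset$; the weaker statement is what makes the paper's multi-step itinerary admissible, and you should exploit that freedom rather than insist on a direct intersection.
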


\begin{proof}

In the holomorphic case, we argued that $\Dhirr$ will have a non-empty intersection with every $D_i$. Here this may not be true for some irreducible components $D_i$ of the vertical boundary, and in those cases we will have to consider a sequence of degenerations and undegenerations, which will ultimately allow us to navigate from~$D_i$ to~$\Dhirr$ along the boundary~$\pHo$. 

Recall that~$D_i$ parameterizes (a connected component of) the space of twisted differentials compatible with a certain enhanced level graph~$\Gamma$ (with a certain prong-matching equivalence class, which we fix once and for all), which has two levels $0$ and $-1$, and all of whose edges are vertical, connecting a vertex at level $0$ with a vertex of level $-1$. Recall that $g$ is equal to the sum of the genera of the vertices of $\Gamma$ plus the first Betti number $h_1(\Gamma)$.

\medskip

\underline{Case 1.} The easy case is if $\Gamma$ has at least one vertex $v\in V(\Gamma)$ such that the genus of the corresponding component $X_v$ of the stable curve is $g_v>0$.  \Cref{lm:pos-genus} resolves this  case.  

\medskip

\underline{Case 2.} The difficulty in the meromorphic case arises for irreducible boundary divisors~$D_i\subset\LMS$ where {\em every} vertex $v\in V(\Gamma)$ has genus $g_v=0$; see \Cref{fig:VertexUp} for an example. By \Cref{lm:no-resid-cond}, there are no global residue conditions, and thus each vertex corresponds to the (usual, not generalized) genus zero stratum $\calH_0(\mu_v)$.  This, however, does not admit a degeneration where the stable curve is irreducible (i.e.~each $\Dhirr[v]=\emptyset$).

    \begin{figure}[h]
    \centering
    \includegraphics[scale=0.7]{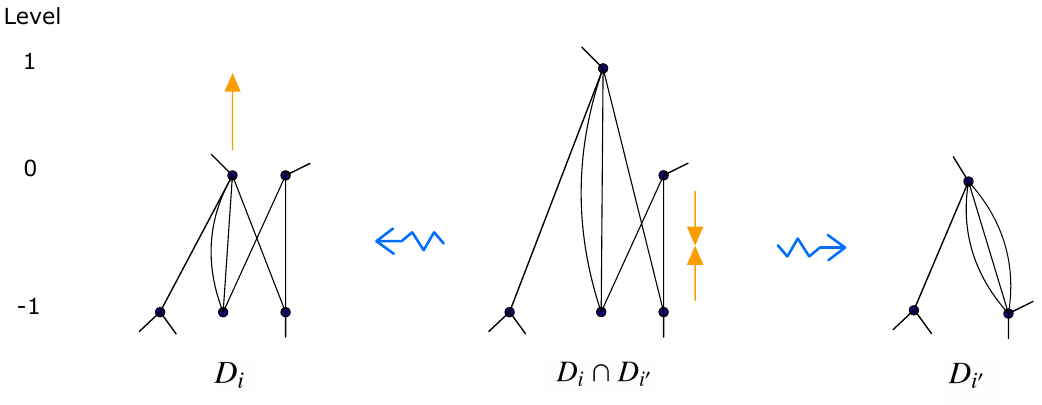}
    \caption{Degeneration and undegeneration used in Case 2 to reduce to the case of a single top level vertex.}
    \label{fig:VertexUp}
    \end{figure}
    
If the top level contains more than one vertex, we pick one top level vertex, and consider the degeneration of the level graph, which introduces level 1 above level 0, and pulls this vertex there --- without changing the dual graph, partial order on the vertices, or enhancements. We thus obtain a graph with levels 1,0,-1, and then consider its undegeneration that collapses the level transition between levels 0 and -1. The top level 
of the resulting graph has a single vertex, while the graph has two levels, and thus parameterizes some irreducible component $D_{i'}$ of the boundary $\pHo$. If any vertex in this new graph has genus $g_v>0$, then by \Cref{lm:pos-genus}, we can connect $D_{i'}$ to $\Dhirr$, and since by construction of the common degeneration we also have $D_i\cap D_{i'}\ne\emptyset$, we see that any point of~$D_i$ can be connected to $\Dhirr$.

If all vertices of $D_{i'}$ still have genus zero, but its bottom level has more than one vertex, we argue as in \Cref{fig:VertexDown}.  First we pull one bottom level vertex to level -2, and then in the resulting three-level graph collapse the level transition between levels~$0$ and $-1$, so that we construct a point in~$D_{i'}\cap D_{i''}$ for some boundary divisor $D_{i''}$ where the corresponding two-level graph~$\Gamma''$ has a unique vertex at each level. Again, if the genus of either vertex of~$\Gamma''$ is positive, we can conclude using \Cref{lm:pos-genus}.  

So we have reduced to the case of a vertical level graph with two vertices, at different levels, both of genus $0$.  Then \Cref{lm:two-vert} gives the required path.  

    \begin{figure}
    \centering
    \includegraphics[scale=0.7]{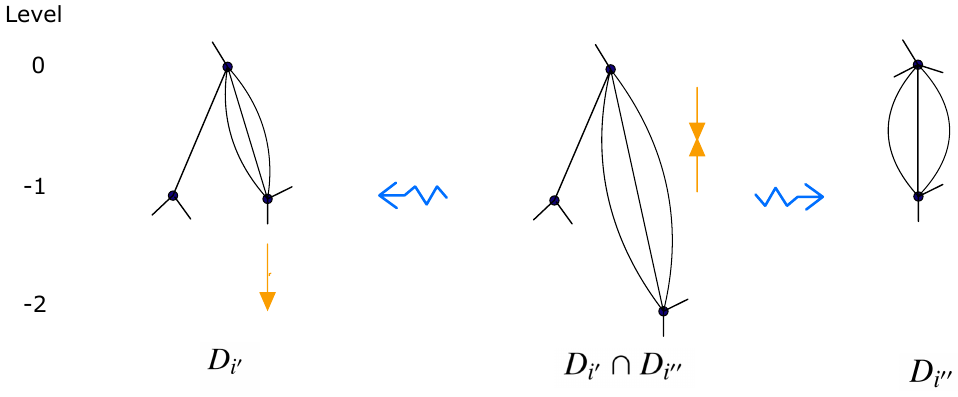}
    \caption{Degeneration and undegeneration used in Case 2 to reduce to the case of a single bottom level vertex.}
    \label{fig:VertexDown}
    \end{figure}

\end{proof}

\begin{lm}
    \label{lm:no-resid-cond}
    Let $D_i$ correspond to a level graph $\Gamma$ with all top level vertices of genus $0$.  Then there are no global residue conditions on the twisted differentials in $D_i$.  
\end{lm}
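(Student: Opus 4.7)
The plan is to unwind the definition of the global residue conditions (GRC) from \cite{BCGGMmsds} and to verify that, under the hypothesis of the lemma, every relevant connected component of $\Gamma$ already contains a prescribed pole, so that no GRC is actually imposed on twisted differentials in $D_i$.

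First, I would recall the precise form of the GRC. Each condition is indexed by a pair consisting of a level $L$ strictly below the top and a connected component $Y$ of the subgraph $\Gamma_{>L}$ that contains \emph{no} prescribed pole (i.e.\ no marked point $z_i$ with $m_i<0$); the condition then requires the sum of the residues of the twisted differential at the polar preimages of the edges descending from $Y$ to level-$L$ vertices to vanish. In particular, if every connected component of $\Gamma_{>L}$ contains a prescribed pole, then no condition is imposed at level~$L$.

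Next, I would reduce to the single relevant level. Since $D_i$ is a vertical boundary divisor of $\LMS$, the enhanced level graph $\Gamma$ has exactly the two levels $0$ and $-1$ and no horizontal edges (these parameterize a different kind of boundary divisor). Hence the only level strictly below the top is $L=-1$, and the subgraph $\Gamma_{>-1}=\Gamma_{\geq 0}$ is just the discrete set of top-level vertices with no edges among them; its connected components are precisely the individual top-level vertices~$v$.

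The main step is to observe that each such top-level vertex $v$ must carry at least one prescribed pole. By hypothesis $g_v=0$, so the twisted differential $X_v$ on $v$ has divisor of total degree $2g_v-2=-2$. The only contributions to this divisor come from (i) zeros of nonnegative order at the preimage on $v$ of each vertical edge descending from $v$, by the sign convention that the top-level side of a vertical edge is a zero, and (ii) the prescribed multiplicities $m_i$ at the marked points $z_i$ lying on $v$. Nonnegativity of the first contribution then forces $\sum_{z_i\in v}m_i\leq -2$, so $v$ must carry at least one $z_i$ with $m_i<0$, i.e.\ a prescribed pole. Consequently every connected component of $\Gamma_{>-1}$ contains a prescribed pole, and no GRC is imposed on the differentials in $D_i$. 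I expect no real obstacle here; once the statement of the GRC is correctly cited from \cite{BCGGMmsds}, the argument is simply bookkeeping with the degree identity on each top genus-zero component.
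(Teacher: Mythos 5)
Your proposal is correct and follows essentially the same route as the paper: the key point in both is that a top-level genus-zero vertex, all of whose edges go down and hence carry nonnegative orders, must contain a marked pole because the divisor of a meromorphic differential on $\PP^1$ has degree $-2$, and a component containing a prescribed pole imposes no global residue condition. Your version just spells out the definition of the GRC from \cite{BCGGMmsds} in more detail, which the paper leaves implicit.
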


\begin{proof}
    Note that any top level genus zero vertex must contain a marked pole --- as all edges of~$\Gamma$ go down from that vertex and correspond to points where the order of the twisted differential is non-negative, while the total number of zeroes and poles of a meromorphic differential on $\PP^1$ is equal to $-2$.  Thus such a vertex does not impose a global residue condition, and since we are assuming all top level vertices have genus zero (and there are only two levels, since $D_i$ is a divisor), there are no global residue conditions at all.    
\end{proof}

\begin{lm}
    \label{lm:pos-genus}
    Suppose $\calH^\circ$ parameterizes surfaces of genus $g>0$, and let $D_i$ correspond to a level graph $\Gamma$ that has at least one vertex of positive genus. Then for a generic point of $D_i$, there is a path in $\pHo$ starting at this point and ending in $D^{h,irr}$.  
\end{lm}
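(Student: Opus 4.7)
The plan is to exhibit a codimension-two boundary stratum of $\oHo$ that lies simultaneously in $\overline{D_i}$ and in $\overline{\Dhirr}$, thereby connecting $D_i$ to $\Dhirr$ through the normal-crossings structure of $\pHo$. Specifically, let $v$ be a vertex of $\Gamma$ with $g_v>0$, and form the enhanced level graph $\Gamma'$ by attaching a horizontal self-loop at $v$. Since smoothing this horizontal loop of $\Gamma'$ recovers $\Gamma$, the stratum $D_{\Gamma'}$ parameterizing multi-scale differentials compatible with $\Gamma'$ lies in $\overline{D_i}$; and since smoothing the level transition of $\Gamma'$ collapses everything to a single-vertex graph carrying only the horizontal loop, which is the combinatorial type defining $\Dhirr$, the stratum $D_{\Gamma'}$ also lies in $\overline{\Dhirr}$. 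Once we establish $D_{\Gamma'}\ne\emptyset$, the local toric structure of the smooth orbifold $\oHo$ will provide a path from a generic point of $D_i$ into $D_{\Gamma'}$ and onwards into $\Dhirr$, all within $\pHo$.

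The remaining task is to show $D_{\Gamma'}\ne\emptyset$, which reduces to producing a twisted differential on the new component at $v$: a genus-$(g_v-1)$ curve carrying the original zeros and poles of $\mu_v$ together with two new simple poles of opposite residues at the preimages of the horizontal node. The GRC conditions on $\Gamma'$ are identical to those on $\Gamma$, since the new horizontal loop is internal to $v$ and does not alter any other part of the graph. If $v$ is a top-level vertex of $\Gamma$, then $\calH_v$ is a usual (not generalized) stratum, and the non-emptiness of $\Dhirr[v]\subset\overline{\calH_v}$ follows directly from \Cref{lm:Dhirr_exists} applied to $\calH_v$, whose underlying genus $g_v>0$. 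Combining a point of $\Dhirr[v]$ with the unchanged differentials at the remaining vertices yields a point of $D_{\Gamma'}$.

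When $v$ is a bottom-level vertex, $\calH_v$ is a generalized stratum with residue conditions coming from the GRC, and \Cref{lm:Dhirr_exists} does not a priori apply. In this case I would instead start from a generic $\eta_v\in\calH_v$, pick a non-separating cycle $\gamma\subset X_v$ (which exists since $g_v\ge 1$), and use that the absolute period of $\eta_v$ along $\gamma$ is a period coordinate on $\calH_v$ independent of the residues at marked points, hence transverse to the GRC constraints cutting out $\calH_v$ in its ambient stratum. Letting this absolute period tend to zero while holding all other period coordinates fixed produces a continuous family in $\calH_v$ whose limit pinches $\gamma$ to a non-separating node, yielding a meromorphic differential on a genus-$(g_v-1)$ nodal curve with simple poles of opposite residues at the preimages of the node and with residues at all other marked points unchanged, hence preserving the GRC. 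The main obstacle is justifying this last step rigorously: verifying that the absolute periods of non-separating cycles really are transverse to the GRC constraints, so that the deformation stays within $\calH_v$ throughout and its limit lies in $\Dhirr[v]\subset\overline{\calH_v}$, producing the required point of $D_{\Gamma'}$.
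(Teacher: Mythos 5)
Your overall strategy (attach a horizontal self-loop at the positive-genus vertex $v$, produce the required twisted differential on $X_v$ from \Cref{lm:Dhirr_exists}, keep the rest of the twisted differential unchanged, and then smooth the level transition to land in $\Dhirr$) is the paper's strategy, and your treatment of the case where $v$ is a top-level vertex is essentially identical to the paper's.

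The gap is in the bottom-level case. You correctly note that \Cref{lm:Dhirr_exists} is stated only for usual strata and does not a priori apply to a generalized stratum cut out by residue conditions, but your proposed fix --- pinching a non-separating cycle while asserting that its absolute period is ``transverse to the GRC constraints'' --- is precisely the step the authors identify as open: the remark following \Cref{lm:Dhirr_exists} says the stretching deformation can change the residues and that they do not know whether that lemma holds for generalized strata with residue conditions. Your sketch does not close this: sending one period coordinate to zero does not by itself force the corresponding cycle to be pinched to a node (one needs the cycle realized as the core curve of a cylinder whose modulus tends to infinity, and controlling the residues along such a flat deformation is exactly the unresolved difficulty). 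The paper avoids the issue entirely by a combinatorial observation you missed: if no top-level vertex has positive genus, then every top-level vertex has genus zero, and each such vertex must carry a marked pole (all its edges go down and carry non-negative orders, while the total degree on $\PP^1$ is $-2$); by \Cref{lm:no-resid-cond} there are then \emph{no} global residue conditions at all, so the bottom-level vertex of positive genus lies in a usual, non-generalized stratum and \Cref{lm:Dhirr_exists} applies verbatim. Replacing your analytic argument by this observation repairs the proof.
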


\begin{proof}
    First we consider the case in which some top level vertex $v\in V(\Gamma)$ (i.e.~such that $\ell(v)=0$) has $g_v>0$.  Since it's at the top, there are no global residue conditions on this component, and the corresponding twisted differentials on $X_v$ are parameterized by a connected component of some (usual, not generalized) stratum $\calH_{g_v}(\mu_v)$. By \Cref{lm:Dhirr_exists}, there exists then the irreducible type boundary divisor $\emptyset\ne\Dhirr[v]\subset\overline{\calH}_{g_v}(\mu_v)$. Taking a point in this divisor, and an arbitrary twisted differential on the rest of the stable curve in $D_i$ then gives a point in $D_i\cap\Dhirr$.  

    It remains to consider the case in which no top level vertex has positive genus.  By \Cref{lm:no-resid-cond}, then there are no global residue conditions at all.  Let $v$ be a bottom level vertex $v$ (i.e.~$\ell(v)=-1$) with $g_v>0$.  Since there are no global residue conditions, again the twisted differentials on $X_v$ are parameterized by a connected component of some stratum $\calH_{g_v}(\mu_v)$, with no residue conditions imposed. This stratum again has a non-empty horizontal irreducible boundary divisor~$\Dhirr[v]$ by \Cref{lm:Dhirr_exists}, and the same argument as above applies.
\end{proof}

\begin{lm}
\label{lm:two-vert}
    Suppose $g\ge 1$ and the dimension of the stratum is at least~$2$. Let $D_i$ correspond to a level graph $\Gamma$ with two vertices (at different levels), both of genus zero. Then for a generic point of $D_i$, there is a path in $\pHo$ starting at this point and ending in $D^{h,irr}$.  
\end{lm}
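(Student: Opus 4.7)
The plan is to connect $D_i$, via a shared codimension-two boundary stratum in $\pHo$, to another vertical boundary divisor $D_{i'}$ whose level graph $\Gamma_{i'}$ contains a vertex of positive genus, so that \Cref{lm:pos-genus} then supplies the final leg of a path from $D_{i'}$ to $\Dhirr$. The key input is that, since both vertices $v_1, v_2$ of $\Gamma$ have genus zero and the total arithmetic genus is $g \geq 1$, we must have $h_1(\Gamma) = g \geq 1$, so $v_1$ and $v_2$ are joined by at least $g+1 \geq 2$ vertical edges in $\Gamma$. The hypothesis $\dim \calH^\circ \geq 2$ ensures $\dim D_i \geq 1$, giving the room to perform the further degenerations described below.

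The construction of the common codimension-two stratum proceeds by introducing a new genus-zero vertex $w$ at a new level of $\Gamma$ (inserted either between the levels of $v_1, v_2$ or above/below them), attached by a single new vertical edge to one of $v_1, v_2$ with a compatible enhancement. The resulting three-level enhanced level graph $\Gamma'$ has codimension two and lies in $\overline{D_i}$: one undegeneration of $\Gamma'$, namely the contraction of this new attaching edge, returns $\Gamma$. The second undegeneration is designed to collapse the original level transition between $v_1$ and $v_2$ together with the contraction of one of the $g+1 \geq 2$ joining edges; this merges $v_1$ and $v_2$ into a single vertex, and because of the multiplicity of joining edges present in $\Gamma$, the $g$ remaining edges become horizontal self-loops at the merged vertex. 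The merged vertex thus acquires arithmetic genus at least one, producing a two-level graph $\Gamma_{i'}$ with a positive-genus vertex, to which \Cref{lm:pos-genus} applies.

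Once the common stratum $D_{\Gamma'} \subset \overline{D_i} \cap \overline{D_{i'}}$ is established, the path in $\pHo$ from a generic point of $D_i$ to $\Dhirr$ is obtained in three stages: first, within $D_i$ to a point of $D_{\Gamma'}$; next, across $D_{\Gamma'}$ into $D_{i'}$; and finally, from a generic point of $D_{i'}$ into $\Dhirr$ by \Cref{lm:pos-genus}.

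The main obstacle is the combinatorial bookkeeping of the undegeneration operations in the multi-scale formalism of \cite{BCGGMmsds}: one must verify the enhancement compatibility under the level-collapse move, the emergence of the self-loops at the merged vertex with the correct arithmetic genus contribution, and that the resulting $D_{i'}$ lies in the same connected component $\calH^\circ$. The delicate case $g = 1$, where only a single self-loop appears and the resulting arithmetic genus is exactly one, is handled with particular care, drawing on the appendix by M.~Lee on the structure of $\Dhirr$ in genus one.
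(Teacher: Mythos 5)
Your high-level strategy --- exhibit a common codimension-two boundary stratum linking $D_i$ to another vertical divisor $D_{i'}$ whose level graph has a positive-genus vertex, and then invoke \Cref{lm:pos-genus} --- is the same as the paper's, and your observation that $h_1(\Gamma)=g$ forces $v_1,v_2$ to be joined by $g+1\ge 2$ vertical edges is correct and is exactly what makes such a construction possible. However, the central move of your argument is not a legitimate operation in the multi-scale formalism. When one collapses the level transition between two adjacent levels, \emph{all} vertical edges joining vertices at those two levels are contracted (all the corresponding nodes are smoothed simultaneously); one cannot contract a single chosen joining edge and have the remaining $g$ edges survive as horizontal self-loops. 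A vertical edge never becomes horizontal under degeneration or undegeneration, and the codimension count (number of levels below top plus number of horizontal edges) already shows that your target graph with $g$ horizontal self-loops cannot be a boundary \emph{divisor} for $g\ge 2$, nor an undegeneration of your three-level $\Gamma'$, which has no horizontal edges. So the positive-genus merged vertex is not produced by the mechanism you describe.

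Even after repairing this (collapsing the $v_1$--$v_2$ transition in your $\Gamma'$ would contract all $g+1$ edges at once and yield a single vertex of genus $g$ joined to $w$ by one edge, to which \Cref{lm:pos-genus} would indeed apply), the construction of the auxiliary vertex $w$ is unspecified at the point where it matters: splitting off a genus-zero vertex attached by a single vertical edge requires colliding marked points whose orders have the right signs, and whether $w$ lands above, between, or below the existing levels is dictated by those orders rather than chosen freely; in particular a $w$ inserted \emph{between} the levels and attached by one edge produces no genus under any undegeneration. The paper's proof handles precisely this bookkeeping: using that one of the two vertices has valence at least $4$, it collides two of the \emph{node-preimages} on that vertex (two upward poles on the bottom vertex, or two downward zeros on the top vertex), so that the new vertex $v'$ is joined to the \emph{other} original vertex by a $2$-cycle $\{e,f\}$ and is forced by the order computation to sit at an intermediate level; collapsing the level transition adjacent to $v'$ then contracts exactly that $2$-cycle and produces a genus-one vertex. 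Finally, your appeal to the appendix for $g=1$ is misplaced: \Cref{prop:Dhirrgenu1} concerns connecting the different components of $\Dhirr$ to one another in genus one and plays no role in this lemma, whose argument in the paper is uniform in $g$.
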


\begin{proof}
    First note that by \Cref{lm:no-resid-cond}, there are no global residue conditions for $D_i$.  Thus each vertex of $\Gamma$ parameterizes genus zero meromorphic differentials with no residue conditions, and hence the stratum of projectivized differentials corresponding to each vertex is isomorphic to $\calM_{0,k}$. It follows that there is a degeneration bringing an arbitrary collection of points together. 

    Now since $\calH^\circ$ has dimension at least $2$, the boundary divisor $D_i$ has dimension at least $1$, and hence one of the vertices of $\Gamma$ has valence at least $4$ (counting marked points).  
    We break up the argument into two cases: 
    \begin{enumerate}[(1)]
    
        \item {\em Bottom vertex has valence $\ge 4$.}  See \Cref{fig:ProduceGenus} for an example of such $D_{i}$.  There is a degeneration for the bottom vertex where two poles corresponding to upward edges (labeled~$e,f$ in the figure) come together. The new graph $\Gamma'$ has a new vertex $v'$ (also of genus zero), which will be connected by edges $e$ and $f$ to the top vertex.  These edges correspond to poles of order at least $-2$ for $v'$. The vertex $v'$ is connected by one new edge, labeled~$h$ in the figure, to the remaining vertex, and thus this edge must correspond to a zero for $v'$, as the sum of the orders of zeroes and poles is $-2$. Thus the vertex $v'$ must be at a level between the original top and bottom levels, which we thus label as $-0.5$. 

       \begin{figure}[h]
        \centering
        \includegraphics[scale=0.7]{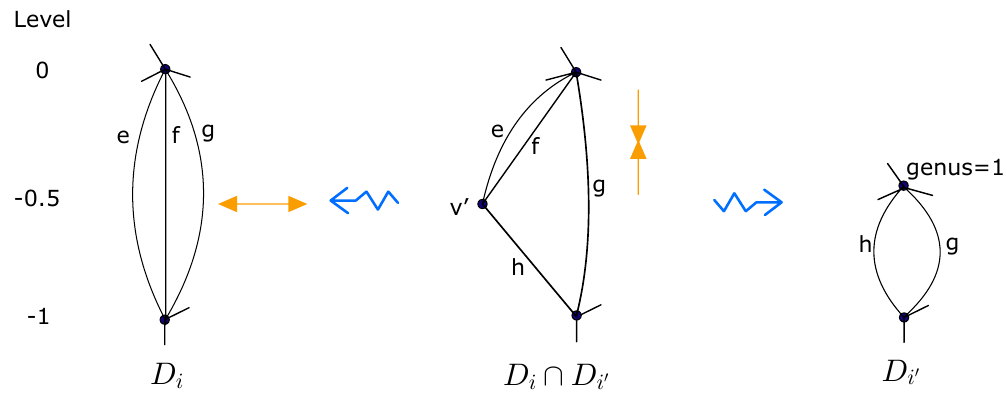}
        \caption{Degeneration and undegeneration that produces a vertex with positive genus.}
        \label{fig:ProduceGenus}
        \end{figure}
    
        Consider then the undegeneration that collapses the level transition between the levels~$0$ and $-0.5$; as it contracts one loop formed by the edges~$e,f$, between two genus zero vertices, in the new graph $\Gamma'$ the new top level vertex will have genus~1.  Then by \Cref{lm:pos-genus} we know that there is a path from the corresponding $D_{i'}$ to $\Dhirr$, and we are done.

        \item {\em Top vertex has valence $\ge 4$.}  There is a degeneration for the top vertex where two points of order $\ge 0$ corresponding to downward edges $e,f$ come together. The new graph $\Gamma'$ has a new vertex $v'$ (also of genus zero), which will be connected by edges $e$ and $f$ to the bottom vertex.  These edges correspond to points of order $\ge 0$ for $v'$. The vertex $v'$ is connected by one new edge $h$ to the remaining vertex, and thus this edge must correspond to a pole for $v'$ which is at least of second order, as the sum of the orders of zeroes and poles is $-2$. Thus~$v'$ must be at a level between the original top and bottom levels.

        From here the argument is the same as in previous case.  
    \end{enumerate}
 \end{proof}

We now complete the proof of the main theorem, by using the same method as in the holomorphic case, and then when \Cref{claim:C} fails, separately dealing as above with the special case of all components of the stable curve being of genus zero. This special case just does not appear in the holomorphic case, as there do not exist genus zero holomorphic strata. 

\begin{proof}[Proof of Main Theorem]
We divide into cases by genus. 
 
    \medskip 
    
    \underline{Case $g>1$.} Recall that $D_j^h$ denote the irreducible components of the horizontal boundary, and $D_i$ denote the irreducible components of the vertical boundary of $\pHo$. By~\Cref{lm:Dhirr_exists}, the irreducible type horizontal boundary divisor $\Dhirr\subset\pHo$ is non-empty, and by~\Cref{thm:Dh_isirr}, $\Dhirr$ is irreducible. By~\Cref{lm:Dh_inter_horiz}, $\Dhirr$ has a non-empty intersection with every irreducible component $D_j^h$ of the horizontal boundary of $\oHo$. 

    By \Cref{lm:horiz-vert}, for any $D_i$, there is a path in $\pHo$ connecting $D_i$ to $\Dhirr$.  Together with the previous paragraph, this implies that $\pHo$ is connected.  Then by \Cref{lm:bdy-cpx-end}, we conclude that $\calH^\circ$ has one end. 

    \medskip
    
    \underline{Case $g=1$.}  Since $\dim \calH^\circ\ge 2$, we have that $n\ge 3$. Then, as in the higher genus case, \Cref{lm:Dh_inter_horiz} and \Cref{lm:horiz-vert} give that  every irreducible boundary divisor of~$\pHo$ is connected to some irreducible component of~ $\Dhirr$.  But now recall that \Cref{thm:Dh_isirr}, due to Lee-Wong, does not apply for $g=1$, so the argument here diverges from that in higher genus.  By \Cref{prop:Dhirrgenu1}, which is proven by Myeongjae Lee in the appendix below, any pair of divisors in $\Dhirr$ can be connected, and we are done.  

    \medskip
    
    \underline{Case $g=0$.} Note that the stratum $\calH_0(m_1,\dots,m_n)$ is isomorphic to $\calM_{0,n}$.  Since $\dim \calH^\circ \ge 2$, we must have $n\ge 5$. It is easy to show that for such $n$, the moduli space $\calM_{0,n}$ is connected and has one end (by e.g. studying the boundary of $\overline{\calM_{0,n}}$).

\end{proof}

\appendix
\section{The horizontal irreducible boundary divisor in genus one}
\begin{center}by \textsc{Myeongjae Lee}\end{center}

In this appendix we show that for~$g=1$ the different irreducible components of $\Dhirr$ are connected to each other in the boundary of the stratum. Recall that for $g>1$ the divisor $\Dhirr\subset\pHo$ is irreducible, see \Cref{thm:Dh_isirr}. By \cite[Thm~1.4 and Thm~1.10]{lewo}, the connected components of $\calH_0(m_1,\dots,m_n\mid -1,-1)$ (i.e.~the locus in the stratum where the residues at the two simple poles are opposite) with $n\geq 3$ are classified by the \textit{index}~$I$, where $1\leq I\leq d\coloneqq \gcd(m_1,\dots,m_n)$. We denote $\Dhirr[I]\subset\overline{\calH}_1(m_1,\dots,m_n)$ the irreducible component of the boundary with index $I$. We remark that swapping the labeling of two simple poles changes the index from $I$ to $d-I$. That is, $\Dhirr[I]=\Dhirr[d-I]$. By plumbing the horizontal node of a multi-scale differential in $\Dhirr[I]$, we obtain a flat surface with a cylinder obtained from the two half-infinite cylinders. The core curve $\alpha$ and the cross curve $\beta$ of this cylinder form a symplectic homology basis $\{\alpha,\beta\}$ such that $\operatorname{Ind}\alpha = 0$ and $\operatorname{Ind} \beta \equiv I\pmod d$. Recall that the index $\operatorname{Ind}\gamma$ of a curve $\gamma$ is the degree of the Gauss map $G_\gamma:S^1\to S^1$, with respect to the flat metric on the translation surface. So $\Dhirr[I]$ smoothens into the connected component of $\calH_1(m_1,\dots,m_n)$ with rotation number $r\coloneqq\gcd(I,m_1,\dots,m_n)=\gcd(I,d)$; that is, the irreducible horizontal boundary of the connected component of $\calH_1(m_1,\dots,m_n)$ with rotation number~$r$ consists of all  $\Dhirr[I]$ such that $r=\gcd(I,d)$. We can prove that each connected component of $\calH_1(m_1,\dots,m_n)$ has an exactly one end by the following

\begin{prop}\label{prop:Dhirrgenu1}
For any $n\ge 3$, let $\calH^\circ$ be the connected component of the stratum $\calH_1(m_1,\dots,m_n)$ consisting of surfaces with rotation number $r\mid d=\gcd(m_1,\dots,m_n)$. Then for any indices~$1\leq I,J\leq d$ such that $r=\gcd(I,d)=\gcd(J,d)$, the irreducible components $\Dhirr[I],\Dhirr[J]$ of~$\pHo$ are connected within $\pHo$.
\end{prop}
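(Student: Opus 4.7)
The plan is to exhibit codimension-$2$ boundary strata in $\oHo$ that serve as bridges between different irreducible components $\Dhirr[I]$ and $\Dhirr[J]$ of $\Dhirr$ within $\pHo$. Since $\pHo$ is a normal crossing divisor, two of its irreducible components are connected within $\pHo$ as soon as there is a codimension-$2$ (in $\oHo$) stratum lying in their common closure.

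The first natural class of bridges consists of \emph{theta-graph} strata: codimension-$2$ strata whose underlying stable curve has two genus-zero irreducible components joined by two horizontal nodes. In genus one such a theta graph has $b_1=1$, so the total genus is $0+0+1=1$, as required. Smoothing one of the two horizontal edges merges the two components into a single genus-zero curve and leaves the other edge as a horizontal self-loop; the resulting point lies in $\Dhirr[I_1]$ for some $I_1$ determined by the distribution of marked points and the residue configuration at the un-smoothed edge. Smoothing the other edge similarly yields a point in $\Dhirr[I_2]$. Thus a theta-graph stratum gives a direct bridge from $\Dhirr[I_1]$ to $\Dhirr[I_2]$ within $\pHo$.

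When theta-graph strata are not available (for configurations $(m_1,\dots,m_n)$ that do not admit a partition of the marked points into two subsets each summing to zero), I would use instead \emph{mixed} codimension-$2$ strata: those consisting of one horizontal self-loop at a genus-zero vertex together with one vertical edge to another genus-zero vertex. Such a stratum lies in $\overline{\Dhirr[I]}\cap\overline{D_i}$ for a vertical boundary divisor $D_i$ whose top vertex has genus one (recovered by smoothing the horizontal loop). Navigating within the closure of $D_i$ via further codimension-$2$ strata (for instance, by subdividing the vertical edge or by further degenerating the genus-one top vertex) lets one transition to different components $\Dhirr[J]$.

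The technical core is to verify, using the classification of connected components of $\calH_0(m_1,\dots,m_n\mid -1,-1)$ from \cite{lewo} together with the explicit description of the index as a winding number of a cross curve in the plumbed cylinder, that the bridges so obtained realize every pair $(I,J)$ with $\gcd(I,d)=\gcd(J,d)=r$. Concretely, one computes how the index depends on the residue at the un-smoothed horizontal edge or on the prong-matching at the vertical edge, and then checks that the equivalence relation on $\{I:1\le I\le d\}$ generated by these bridges coincides with the rotation-number equivalence $I\sim J\iff \gcd(I,d)=\gcd(J,d)$. The main obstacle I expect is precisely this combinatorial analysis: one must treat every configuration $(m_1,\dots,m_n)$ uniformly, and in cases where a single theta-graph or mixed bridge falls short, one must iterate compositions of bridges (possibly passing through vertical boundary divisors with a genus-one top vertex as an intermediate step) until all same-rotation-number indices are connected.
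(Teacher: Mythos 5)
Your second mechanism (the ``mixed'' codimension-$2$ strata: a horizontal self-loop at a genus-zero vertex plus a vertical edge, sitting in $\overline{\Dhirr[I]}\cap\overline{D_i}$ for a vertical divisor $D_i$ with a genus-one vertex) is exactly the bridge the paper uses; the theta-graph idea is a detour the paper never needs, and as you note it is unavailable for many $(m_1,\dots,m_n)$ (it requires a proper partition of the marked points into two nonempty zero-sum subsets, which fails e.g.\ for $\calH_1(2,2,-4)$). The problem is that everything you defer to ``the technical core'' is where the entire content of the proposition lives, and your proposal gives no mechanism for carrying it out: you do not say which vertical divisors $D_i$ to use, which pairs of indices a given bridge actually connects, or why the equivalence relation generated by your bridges is all of $\{\gcd(I,d)=\gcd(J,d)=r\}$ rather than something finer. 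As written, the argument could just as well terminate in a strictly finer partition of the indices, and nothing in the proposal rules that out.

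The paper closes this gap with a completely explicit chain. It suffices to connect $\Dhirr[I]$ to $\Dhirr[r]$. For each $i$ with $|m_i|>1$, one takes the vertical two-level divisor $D_i$ whose genus-zero vertex carries $\calH_0(m_1,\dots,m_i-2,\dots,m_n)$ (connected) and whose genus-one vertex carries the component $\mathcal{C}_{\gcd(I,m_i)}$ of $\calH_1(m_i,-m_i)$ with rotation number $\gcd(I,m_i)$ (the genus-one vertex sits at the bottom or top level according to the sign of $m_i$, not always at the top as you assert). The key input is the index/rotation-number dictionary: the horizontal boundary of $\mathcal{C}_{R}$ in $\calH_1(m_i,-m_i)$ consists of the components of $\calH_0(m_i,-m_i\mid -1,-1)$ of every index $t$ with $\gcd(t,m_i)=R$. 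Hence $D_i$ meets both $\Dhirr[I]$ and $\Dhirr[\gcd(I,m_i)]$, and iterating over $i=1,\dots,n$ descends from $I$ to $\gcd(I,m_1,\dots,m_n)=\gcd(I,d)=r$. If you want to salvage your write-up, this specific choice of $D_i$ and the iteration over $i$ are precisely what must be supplied; the general framework of ``bridges through codimension-$2$ strata'' is correct but is not by itself a proof.
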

\begin{proof}
We will prove that $\Dhirr[I]$ is connected to~$\Dhirr[r]$ within $\pHo$. Then $\Dhirr[I]$ and $\Dhirr[J]$ are also connected within $\pHo$ through $\Dhirr[r]$. 

If $m_i=\pm 1$ for some $i$, then $d=1$, and there exists a unique component $\Dhirr[1]$ of $\Dhirr$, so that the proposition is obvious. Also if $I=d$, then we have $J=d$ and there is nothing to prove. 

So we assume that $|m_i|>1$ for all $i$, and that $I<d$. For each $i$, we claim that there exists an irreducible boundary divisor $D_i\subset\pHo$ that intersects both $\Dhirr[I]$ and $\Dhirr[\gcd(I,m_i)]$. Then $\Dhirr[I]$ and $\Dhirr[\gcd(I,m_i)]$ are connected within $\pHo$ by going along~$D_i$. By repeating this for all $i$, $\Dhirr[I]$ and $\Dhirr[\gcd(I,m_1,\dots,m_n)]=\Dhirr[r]$ are connected within $\pHo$, completing the proof. 

Suppose first that $m_i>1$. Let $D_i$ be the boundary divisor consisting of two-level multi-scale differentials $\overline{X}$ satisfying the following: 
\begin{enumerate}
    \item $\overline{X}$ has two irreducible components at two different levels intersecting at one node. 
    \item The top level component $X_0$ is contained in the stratum $\calH_0(m_1,\dots,m_i-2,\dots, m_n)$ (which is connected).
    \item The bottom level component $X_{-1}$ is contained in the connected component $\mathcal{C}_{\gcd(I,m_i)}$ of $\calH_1(m_i,-m_i)$ with rotation number $\gcd(I,m_i)$.
\end{enumerate} 

Since $\gcd(I,m_i)\leq I<d\leq m_i$, the connected component $\mathcal{C}_{\gcd(I,m_i)}$ exists uniquely by \cite[Thm~1.1]{BoissyMeromCC}, which says that the stratum $\calH_1(m_i,-m_i)$ has a unique connected component $\mathcal{C}_R$ with rotation number $R$ for each $R\mid m_i$ with $R<m_i$. Using the relation between the index and the rotation number, we see that this boundary stratum $D_i$ lies in the boundary of $\calH^\circ$.  

\begin{figure}[!h]
\includegraphics[scale=0.84]{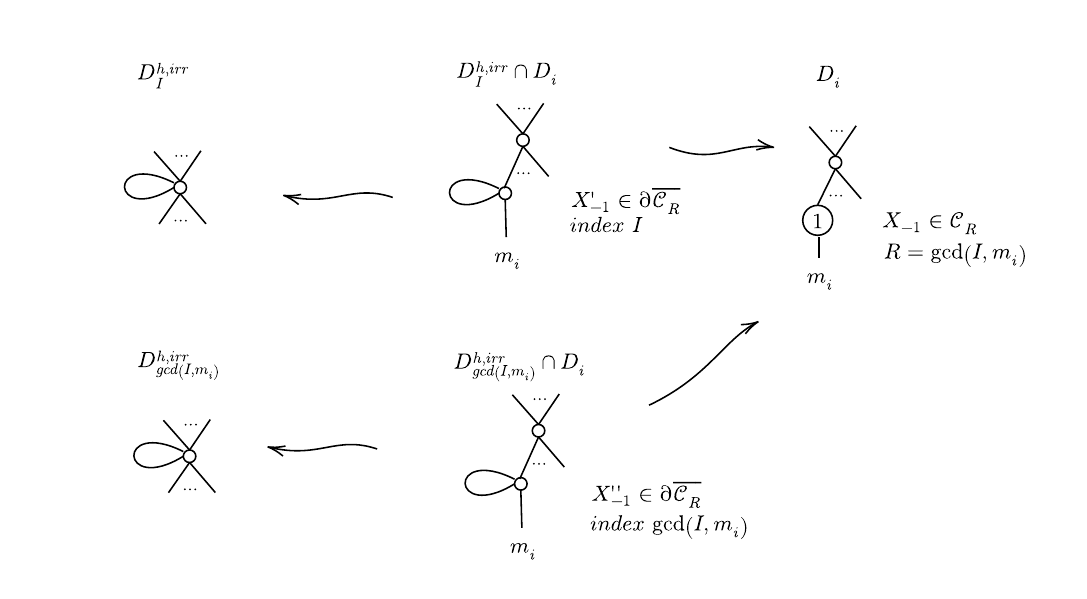}
    \caption{Construction of $D_i$ for the case $m_i>1$.}
    \label{fig:Appendix}
\end{figure}

The horizontal boundary divisors of $\mathcal{C}_{\gcd(I,m_i)}$ arise from the connected components of $\calH_0(m_i,-m_i \mid -1,-1)$ with indices $1\leq t\leq m_i$ such that $\gcd(I,m_i)=\gcd(t,m_i)$. In particular, two flat surfaces $X'_{-1}, X''_{-1}\in \calH_0(m_i,-m_i \mid -1,-1)$ with indices $I$ and $\gcd(I,m_i)$, respectively, are contained in the boundary of the same connected component $\mathcal{C}_{\gcd(I,m_i)}$. These flat surfaces are contained in the intersections $\Dhirr[I]\cap D_i$ and $\Dhirr[\gcd(I,m_i)]\cap D_i$, respectively. See \Cref{fig:Appendix}.

If $m_i<-1$, then similarly let $D_i$ be the boundary divisor consisting of $\overline{Y}$ satisfying:

\begin{enumerate}
    \item $\overline{Y}$ has two irreducible components at two different levels intersecting at one node. 
    \item The bottom level component $Y_{-1}$ is contained in the connected stratum $\calH_0(m_1,\dots,m_i-2,\dots,m_n)$.
    \item The top level component $Y_0$ is contained in the connected component $\mathcal{C}_{\gcd(I,m_i)}$ of $\calH_1(m_i,-m_i)$ with rotation number $\gcd(I,m_i)$.
\end{enumerate} 

By the similar argument as in the previous paragraph with the top level component $Y_0$, we can conclude that $D_i$ intersects both $\Dhirr[I]$ and $\Dhirr[\gcd(I,m_i)]$.
\end{proof}

%\bibliographystyle{amsalpha}
%\bibliography{Master_bib}
\newcommand{\etalchar}[1]{$^{#1}$}
\providecommand{\bysame}{\leavevmode\hbox to3em{\hrulefill}\thinspace}
\providecommand{\MR}{\relax\ifhmode\unskip\space\fi MR }
% \MRhref is called by the amsart/book/proc definition of \MR.
\providecommand{\MRhref}[2]{%
  \href{http://www.ams.org/mathscinet-getitem?mr=#1}{#2}
}
\providecommand{\href}[2]{#2}

\end{document}